\newtheorem{defn}{Definition}
\newtheorem{lem}{Lemma}
\newtheorem{pro}{Proposition}
\newtheorem{rem}{Remark}
\newtheorem{cor}{Corollary}
\newcommand{\C}{\mathbb{C}}
\newcommand{\CC}{\widehat{\mathbb{C}}}
\newcommand{\D}{\mathbb{D}}
\newcommand{\R}{\mathbb{R}}
\newcommand{\M}{\mathcal{M}}
\newcommand{\vf}{\frac{d}{dz}}
\newtheorem*{thmA}{Theorem A}
\renewcommand{\b}{\color{blue}}
\title{\bf On the separatrix graph of a rational vector field on the Riemann sphere}
\author{ Kealey Dias\thanks{The first author has been partially supported by the Association for Women in Mathematics  Travel Grant  October 2019 Cycle (NSF 1642548) and the second author has been partially supported by MINECO-AEI grant
MTM-2017-86795-C3-2-P.}\\
{\small Bronx Community College of the City University of New York}\\
{\small 2155 University Ave. Bronx, New York, USA 10453. }\\
{\small kealey.dias@gmail.com} \and
 Antonio Garijo\\
{\small Dept. d'Enginyeria Inform\`atica i Matem\`atiques. Universitat Rovira i Virgili}\\
{\small Av. Pa\"isos Catalans 26. Tarragona 43007, Spain.}\\
{\small antonio.garijo@urv.cat}  }
\begin{document}

\maketitle

\begin{abstract}
{ We consider  the rational flow $\xi_R(z)= R(z) (d/dz)$ where $R$ is given by the quotient of two polynomials without common factors  on the Riemann sphere. The separatrix graph $\Gamma_R$  is the boundary between trajectories with different properties. We characterize the properties of a planar directed graph to be the separatrix graph of a rational vector field on the Riemann sphere.}
\vspace{0.5cm}

\noindent \textit{Keywords: vector fields, holomorphic foliations, separatrix graph}

\vglue 0.2truecm

\noindent \textit{MSC2010:  37C10, 34C05, 34M99, 37F75, 30F30}
\end{abstract}

\section{Introduction}\label{sec:intro}

Complex differential equations have been playing an important role both in theoretical and  in applied mathematics. Therefore, the study of these continuous dynamical systems could be interesting for wide areas of knowledge, from complex geometry to fluid dynamics.  In this work we investigate some properties of a rational vector field defined in the Riemann sphere $\CC$. More precisely, we consider

\begin{equation}\label{eq:Main}
\frac{dz}{dt} = R(z) , \qquad t \in \R
\end{equation}
\noindent defined by a rational map $R: \CC \to \CC$ given by $R(z)=P(z)/Q(z)$ where $P$ and $Q$ are two polynomials that we assume without common factors. We also denote by $n$ and $m$ the degree of the
polynomials $P$ and $Q$, respectively. We assume that the degree of $R$, denoted by $deg{(R)} = \max{\{n,m\}}$, is bigger or equal than $2$.   Equivalently,   $\xi_R(z)= R(z) (d/dz)$ denotes the rational vector field associated  to  $R$. We concentrate in rational vector fields on the Riemann sphere. For a deep investigation on other Riemann surfaces  we refer to \cite{FlowsRiemannSurfaces} and in the presence of an essential singularity to  \cite{EssentialSingularity}.

These type of vector fields has been widely investigated  from the local and global point of view (see  for instance \cite{NotesMeromorphicPartI,NotesMeromorphicPartII, ConformalEquivalence, VectorFieldsComplexFunctions,MeromorphicComplexDifferentialEquations, PlaneAutonomous,ComplexStructures,LocalGlobalPhasePortrait}) and we quote briefly some well  known results of these systems.  The fact that the vector field $\xi_R$ comes from a meromorphic map $R$ provides specific properties to these kind of continuous dynamical systems. Among others $\xi_R$  does not present limit cycles or there are not center-focus problem  since the linear part of a critical point determines its behavior. More precisely, any simple zero $\alpha$ of  $P$ gives  a  {\it sink/source} or a {\it center}  of $\xi_R$ according to $Re(P'(\alpha))$  is different or equal to zero, respectively. A multiple zero of $P$  gives a critical point of $\xi_R$ with elliptic sectors. Finally, any root of $Q$ gives a  generalized saddle of the rational flow with  hyperbolic sectors. A phase curve is called a  {\it separatrix}  if it tends to a saddle point.  The {\it separatrix graph}, denoted by $\Gamma_R$, of the rational flow $\xi_R$ is formed by the union of the closure of  all the sepatrices of the system.  The main goal of this work is to find a characterization of the separatrix graph of a rational vector field.

 We mention two particular cases of rational vector fields. The first one is when the rational map is a polynomial $P$.  The critical points  of the polynomial flow $\dot{z}=P(z)$ are the zeros of $P$, and $\infty$ is  a  generalized saddle point. For more details of the classification of polynomial flows via a complete set of realizable topological and analytic invariants, we refer the reader to \cite{ClassificationComplexPolynomial,EnumeratingCombinatorialClasses}.  Our approach will be based on the polynomial case,  taking into account that in the rational case has a richer structure in the phase space.  One example of this difference is that the separatrix graph of polynomial flow is always connected, which is not always  the case for rational flows.

 The second particular case of rational flows we highlight is the {\it Newton's} flow when $$R(z)=-P(z)/P'(z)$$ where $P$ is a polynomial.  In this  case, the  critical points of Newton's flow are sinks (zeros of $P$) and simple saddles (zeros of $P'$) and $\infty$ is a source.  Among other properties the  separatrix graph of a Newton's flow is connected.  In 1985 S. Smale proposed a question related to the characterization of the separatrix graph of a  Newton's flow (\cite{EfficiencyAlgorithms}).   In 1988  M. Shub, D. Tischler and R. Williams solved completely this question (\cite{ NewtonianGraphComplexPolynomial}) characterizing the properties of a planar graph to be the separatrix graph of a Newton's flow.

  Based on the same question, our goal is to find a characterization of the separatrix graph of any rational vector field. An {\it admissible graph} is a planar graph with the main properties of the  separatrix graph of a rational vector field (see  \S \ref{sec:admissible_graph} for a precise definition).  The main objective of this paper is to prove the following result.

\begin{thmA}
  A planar directed graph $\Gamma$ homeomorphic  to the separatrix graph of a  rational vector field  if and only if  $\Gamma$ is an admissible graph.
\end{thmA}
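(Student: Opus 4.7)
The plan is to prove Theorem~A by treating the two implications separately. For the necessity direction (a separatrix graph is admissible) I would proceed by a systematic local-to-global analysis. At each vertex, the type of the critical point -- simple zero of $P$ with $\operatorname{Re} P'(\alpha)\neq 0$ giving a sink or source, simple zero with $\operatorname{Re} P'(\alpha)=0$ giving a center, multiple zero giving elliptic sectors, and pole of $R$ giving hyperbolic sectors -- dictates the cyclic pattern, orientations, and multiplicities of the incident separatrices. Globally, the Poincaré--Hopf theorem forces the sum of indices to equal $\chi(\CC)=2$, and the degree of $R$ bounds the total combinatorial complexity. The classification of the flow inside each complementary face (basin of a sink/source, periodic center disc, or saddle-connection strip) then yields the face-level axioms of admissibility listed in Section~\ref{sec:admissible_graph}.

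For the converse (every admissible graph is realised), I would follow the strategy used by Shub, Tischler, and Williams in the Newtonian case. Given an admissible graph $\Gamma$, decompose $\CC\setminus\Gamma$ into its finitely many faces and identify each face with one of the standard combinatorial types singled out by admissibility. For each face choose a canonical holomorphic model: the translation flow $d/dz$ on a half-plane for a sink/source basin, the rotational flow $2\pi i\, z\, d/dz$ on a disc for a center, and appropriate strip models for the saddle-connection regions. Then glue these models by orientation-preserving homeomorphisms along the edges of $\Gamma$, consistently with the prescribed cyclic order and orientations at vertices, producing a topological flow on $\CC$ whose separatrix graph is homeomorphic to $\Gamma$.

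The main obstacle is promoting this topological assembly to a genuine meromorphic flow $\xi_R$ with $R$ rational. I would invoke the measurable Riemann mapping theorem: the quasiconformal gluing determines a bounded Beltrami coefficient $\mu$ on $\CC$, and its normalised integrating homeomorphism $\varphi$ conjugates the assembled vector field to a holomorphic one. The pushed-forward field has isolated singularities only at the images of the vertices of $\Gamma$, with orders matching those prescribed by the admissibility data, hence is globally of the form $R(z)\, d/dz$ with $R=P/Q$ rational. The most delicate step, and the one I expect to require the most careful argument, is ensuring that the homeomorphism type of the separatrix graph is preserved under $\varphi$ -- so that no phantom separatrices appear and none of the prescribed ones are collapsed. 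This is where the compatibility hypotheses built into the definition of admissibility (matching moduli and orientation data across adjacent faces, index balance at vertices, and control on the behaviour at $\infty$) play their essential role.
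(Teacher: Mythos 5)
Your necessity argument is essentially the paper's Lemma \ref{lem:admissible_rat} (local models at the critical points, Poincar\'e index, classification of zones) and is fine. The realization direction, however, has a genuine gap. Gluing holomorphic face-models by arbitrary orientation-preserving homeomorphisms does not hand you a bounded Beltrami coefficient --- for that the boundary gluings must be quasisymmetric --- and, more seriously, even with $\mu$ bounded the integrating map $\varphi$ of the measurable Riemann mapping theorem straightens the \emph{complex structure}, not the vector field: $\varphi_{\ast}$ of your assembled field is holomorphic only if the almost complex structure being straightened is invariant under the real-time flow of that field. That invariance holds inside each face (where the field is $d/dz$ and the structure is standard) but is destroyed by any non-conformal gluing along the edges, and you give no mechanism to restore it. The paper sidesteps all of this by gluing the rectified models by isometries (translations), so the transition maps are conformal, the glued object $\M$ is already a Riemann surface carrying a globally defined meromorphic vector field, and the only analytic input is the Uniformization Theorem; the degree count $\deg(P)=\deg(Q)+2$ then comes from an Euler-characteristic computation that your sketch also omits.

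The price of conformal gluing is an exact compatibility condition that your proposal suppresses (and your list of face types, which omits the annular zones, hides): the two boundary components of every finite-cylinder zone must have equal total length, because its trajectories are isochronous periodic orbits. Whether one can assign strictly positive lengths to the edges of $\Gamma$ so that all $N$ of these equalities hold simultaneously is a nontrivial combinatorial fact; the paper isolates it as Proposition \ref{lengthprop} and proves it in the Appendix via Dines' criterion for positive solutions of homogeneous linear systems, using the specific incidence properties of annuli in a planar graph. In your scheme this constraint does not disappear --- it reappears as the requirement that the conformal moduli and circumferences of the annular pieces be simultaneously realizable --- and the phrase ``matching moduli across adjacent faces'' names exactly the statement that needs proof. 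Without either the quasiconformal surgery made rigorous or this length-assignment lemma, the realization half of Theorem A is not established.
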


Significant progress has been made towards solving this problem in the work of J. Muci\~no (see \cite{ComplexStructures}). That paper completely characterized the rational flows on compact Riemann surfaces of genus $g\geq 1$. The paper also gave a partial characterization of rational flows on $\CC$: those where the poles and equilibrium points are simple.  It remains to show the complete case of rational flows on $\CC$, which is what we do in this paper.

In  \S \ref{sec:preli} we present the main properties of a rational flow, we define the separatrix graph of a rational vector field and deduce some of its  properties. In  \S \ref{sec:admissible_graph} we introduce the abstract definition of an admissible graph,  based on the properties of the separatrix graph of a rational flow. Finally, in  \S \ref{sec:admissible_separatrix}
we prove the characterization of a separatrix graph of a rational flow stated in Theorem A.

\section{Preliminaries}\label{sec:preli}

In this section we recall the main aspects of the phase portrait of a rational flow (see \cite{NotesMeromorphicPartI,NotesMeromorphicPartII, ConformalEquivalence, VectorFieldsComplexFunctions,MeromorphicComplexDifferentialEquations, PlaneAutonomous,ComplexStructures,LocalGlobalPhasePortrait,ClassificationComplexPolynomial,EnumeratingCombinatorialClasses}). We start studying  the local dynamics of equation (\ref{eq:Main}) in some punctured neighborhood of a critical point. The local behavior of a rational flow around a critical point $z_0$ (see Proposition \ref{pro:index})  comes from its local normal form. For a simple zero of $R$ the dynamics near $z_0$ is conformally conjugate to $\dot{z}=R'(z_0) z$ near the origin; for a  multiple zero of $R$ of order $k \geq 2$  one normal form is given by $\dot{z}= z^k/(c+cz^{k-1})$ near the origin where $c={\rm Res}(1/R,z_0)$. Finally, the rational flow near a pole of $R$ of order $k \geq 1$ is conformally conjugate to $\dot{z}=1/z^k$ near the origin.

 In the next proposition we describe the local phase portrait, and we show the different types of behaviors in Figure  \ref{fig:focals_simple}.

\begin{pro}\label{pro:index} Let $R(z)$ be a holomorphic function in a punctured
neigbourhood of $z_0.$
\begin{enumerate}[label=(\alph*)]
 \item If $z_0$ is a zero of $R$ and $R^{\prime}(z_0)\ne 0$, according to $Re(R'(z_0))<0$,
  $Re(R'(z_0))>0$ or $Re(R'(z_0))=0$, then the phase portrait of \eqref{eq:Main} in
  a neigbourhood of $z_0$ is a {\it sink}, a {\it source} or
an {\it isochronous center}, respectively. In all the cases the
index of $\dot{z}=R(z)$ at $z_0$ is 1.
 \item If $z_0$ is a zero of $R$ of order $k\geq 2$, then the phase portrait of \eqref{eq:Main} in
a neigbourhood of $z_0$  contains the union of $2(k-1)$ elliptic sectors,  so the index of $\dot{z}=R(z)$ at $z_0$ is $k.$
\item If $z_0$ is a pole of $R$ of order $k\geq 1$, the phase portrait of \eqref{eq:Main} in
a neigbourhood of $z_0$ is a union of $2(k+1)$ hyperbolic sectors,
and so the index of $\dot{z}=R(z)$ at $z_0$ is $-k.$
\end{enumerate}
\end{pro}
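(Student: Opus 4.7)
The plan is to analyze each case starting from the local normal forms quoted immediately before the proposition, and to extract the index from the winding-number formula
\[
\mathrm{ind}_{z_0}(\xi_R) \;=\; \frac{1}{2\pi i}\oint_{|z-z_0|=\varepsilon}\frac{R'(z)}{R(z)}\,dz,
\]
which by the argument principle equals the order of $R$ at $z_0$ (taken positive at a zero, negative at a pole). This gives immediately the claimed indices $1$, $k$, and $-k$ in parts (a), (b), and (c); the remaining work is to describe the sectors.

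For part (a), the stated linear conjugacy $\xi_R \sim \dot{w} = \lambda w$ with $\lambda = R'(z_0)$ reduces the question to integrating a linear equation. The solution $w(t) = w_0 e^{\lambda t}$ satisfies $|w(t)| = |w_0|\, e^{\mathrm{Re}(\lambda)t}$ and $\arg w(t) = \arg w_0 + \mathrm{Im}(\lambda)\, t$, so the three sign regimes of $\mathrm{Re}(\lambda)$ produce respectively a spiralling sink, a spiralling source, and, when $\mathrm{Re}(\lambda)=0$, closed orbits of common period $2\pi/|\mathrm{Im}(\lambda)|$, i.e.\ an isochronous center. For parts (b) and (c), write $z-z_0 = re^{i\theta}$ and compute the polar components of the leading part of the normal form. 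Case (b) yields $\dot\theta = C r^{k-1}\sin((k-1)\theta + \varphi)$ for suitable constants $C,\varphi$; the $2(k-1)$ zeros of the sine on $[0,2\pi)$ give the tangent directions, the radial component has constant sign on each open sector between them, and a direct integration of $\dot z = a z^k$ shows that both the $\alpha$- and the $\omega$-limit set of any trajectory in such a sector reduce to $z_0$, so the sector is elliptic. Case (c) is analogous: one gets $\dot\theta = -C' r^{-(k+1)}\sin((k+1)\theta)$ with $2(k+1)$ tangent directions, the radial variable is monotone on each sector, and explicit integration of $\dot z = 1/z^k$ shows each sector is hyperbolic, with trajectories entering along one separatrix and exiting along the next.

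The main subtlety is to justify that the higher-order corrections appearing in the quoted normal forms do not alter the number or type of the sectors. I would handle this by a standard persistence argument: the tangent directions of $\xi_R$ at $z_0$ are the transverse zeros of the function $\theta \mapsto \arg R(z_0 + re^{i\theta})$ on a sufficiently small circle, hence are determined by the leading term of the Laurent expansion of $R$ alone; similarly, the sign of the radial derivative on each open sector is controlled by that leading term. Consequently the sectoral decomposition computed for $\dot{z} = a z^k$ in case (b) and $\dot{z} = 1/z^k$ in case (c) carries over verbatim to the full normal form, and hence to $\xi_R$ in a punctured neighbourhood of $z_0$.
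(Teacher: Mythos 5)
The paper does not actually prove this proposition: it is recalled as a classical local result, justified only by the normal forms quoted in the paragraph preceding it and by the cited literature, so there is no internal proof to compare yours against line by line. Your overall route is the standard one and is consistent with what the paper implicitly relies on: the index computation via the argument principle, $\frac{1}{2\pi i}\oint R'/R\,dz=\mathrm{ord}_{z_0}(R)$, is correct and clean, the linear analysis in part (a) is fine, and the counts $2(k-1)$ and $2(k+1)$ of tangent directions from the zeros of $\sin((k-1)\theta+\varphi)$ and $\sin((k+1)\theta)$ are right.

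There are, however, two false intermediate claims in your sector analysis. For $\dot z=az^k$ one has $\dot r=|a|r^k\cos((k-1)\theta+\varphi)$ while $\dot\theta=|a|r^{k-1}\sin((k-1)\theta+\varphi)$; the zeros of the cosine interlace those of the sine, so $\dot r$ changes sign exactly once in the interior of each sector between consecutive tangent rays --- it does \emph{not} have constant sign there (already for $\dot z=z^2$ the orbits in the upper half-plane are circles through the origin, on which $r$ increases and then decreases). Likewise in part (c) the radial variable is not monotone on a hyperbolic sector: for $\dot z=1/z$ the orbits are hyperbola branches $2xy=c$, on which $r$ decreases to a minimum and then increases. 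Neither claim is needed --- the explicit integrations $z^{1-k}=z_0^{1-k}-(k-1)t$ and $z^{k+1}=z_0^{k+1}+(k+1)t$, which you also invoke, already show that every orbit off the tangent rays is homoclinic to $z_0$ in case (b) and enters and leaves a fixed neighbourhood in finite time in case (c) --- so you should simply delete them. Finally, the persistence step is asserted rather than proved; note that it is only genuinely needed in case (b), since in case (a) the paper's linearization and in case (c) the exact normal form $\dot z=1/z^k$ dispose of the higher-order terms, and even in case (b) the correction factor $1/(c+cz^{k-1})$ in the quoted normal form is nonvanishing near $0$, so a short argument (or a citation for the theory of characteristic directions) would close this more convincingly than an appeal to a ``standard persistence argument.''
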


In Figure  \ref{fig:focals_simple} we sketch the phase portrait of a rational flow in a neighborhood of critical point. In the first row we show the local phase portrait of the simple zeros of $R$ which could be  sinks, sources, or  isochronous  centers. In the second row we show the phase portrait of  a multiple
zero of $R$ with multiplicities $k=2,3$. Finally, in the third row we show the local phase portrait of a pole of $R$ with multiplicities $k=1,2$.

\begin{figure}[ht]
    \centering
     \includegraphics[width=0.75\textwidth]{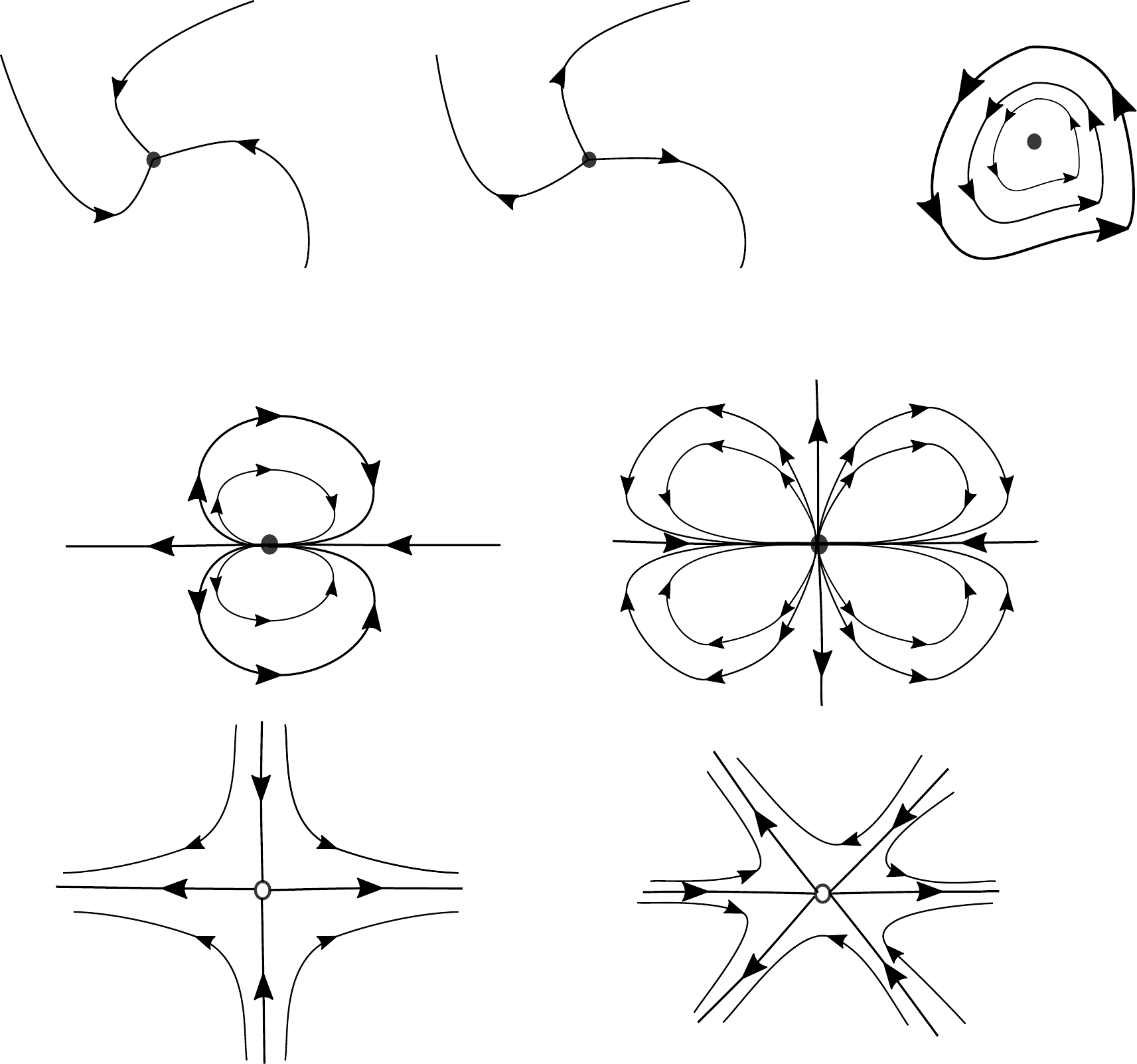}
    \put(-160,228) {\small source }
        \put(-280,228) {\small sink }
    \put(-20,228) {\small center }
    \put(-35,118) {\small elliptic  }
    \put(-35,108) {\footnotesize $(k=3)$ }
   \put(-220,118) {\small elliptic  }
   \put(-220,108) {\footnotesize $(k=2)$  }
      \put(-220,18) {\small saddle }
      \put(-220,8) {\footnotesize $(k=1)$}
     \put(-35,18) {\small saddle }
      \put(-35,8) {\footnotesize $(k=2)$}
           \caption{\small{Different local phase portraits near a critical point in a rational flow. In the first row a sink, a source and a center; in the second row two examples of a critical point of elliptic type; and finally in the third row two  saddles.}}
    \label{fig:focals_simple}
\end{figure}

 From Proposition \ref{pro:index}  a  multiple root of $R$ is also called an {\it  elliptic} critical point while a pole of  $R$ is  called  a  {\it  saddle} or a  {\it hyperbolic}  critical point.

    Given any non-critical point $z_0$ in the Riemann sphere, we denote by $\gamma(t,z_0)$ the solution of the Cauchy  problem associated to \eqref{eq:Main} with initial condition $\gamma(0,z_0)=z_0$. Furthermore  we can assume that $\gamma$ is defined in its maximal interval of definition, thus  $\gamma(\cdot,z_0): (t_{min},t_{max}) \to \CC $ is called a {\it trajectory} through $z_0$.  There are only two possibilities for the limit set of a trajectory:  a critical point or a periodic orbit. In the first case critical points are sinks, sources or elliptic points  (corresponding to $t_{min}=-\infty$ and/or $t_{max}=+\infty$) or  saddles (corresponding to   $t_{min}>-\infty$ and/or $t_{max}<+\infty$); and the second case  corresponds to a periodic trajectory $\gamma(t + \tau, z_0) = \gamma(t,z_0)$  of minimal period $\tau>0$. The   Poincar\'e-Bendixon  Theorem of a rational flow says that the  $\omega-$limit set
of an orbit  is either a critical point or a periodic orbit, and  in the second case the  nearby orbit must be periodic. The missing cases (limit cycles and a union of singular points and phase curves) are not allowed due to the holomorphic structure of the flow.

We say that a  trajectory $\gamma$ is a {\it separatrix}  if  it lands in a (generalized) saddle point, or in other words, a separatrix is a trajectory for which the maximal interval of definition is different from $\mathbb R$. There are also different types of separatrices:  an {\it outgoing separatrix} when  $t_{min}>-\infty$ and $t_{max}=\infty$,  an {\it ingoing separatrix} when $t_{min}=-\infty$ and $t_{max}< \infty$,  an {\it heteroclinic separatrix} when $t_{min} > - \infty$, $t_{max}< \infty$ and $$\displaystyle \lim_{t \to t_{min}} \gamma(t,z_0) \neq       \lim_{t \to t_{max}} \gamma(t,z_0),$$
\noindent and finally  an  {\it homoclinic separatrix} when
$t_{min} > - \infty$, $t_{max}< \infty$ and $$\displaystyle \lim_{t \to t_{min}} \gamma(t,z_0) =       \lim_{t \to t_{max}} \gamma(t,z_0).$$
Equivalently,  we could define the separatrices  as the union of all the stable and unstable manifolds of all the hyperbolic points.

There are several equivalent definitions of the separatrix graph of the vector field $\xi_R$ defined on the Riemann sphere $\CC$. We define the separatrix graph as the union of the closure of  all the separatrices, thus
\begin{equation}\label{def:separatrix_graph}
\Gamma_R =  \bigcup_{\gamma \,  separatrix}   \overline{\gamma}.
\end{equation}

We notice that every separatrix $\gamma(\cdot,z_0): (t_{min},t_{max}) \to \CC$ is defined on an open interval, thus we denote by $\overline{\gamma}$ the separatrix $\gamma$ and the two extremities $\displaystyle \lim_{t \to t_{min}} \gamma(t,z_0)$ and $\displaystyle \lim_{t \to t_{max}} \gamma(t,z_0)$. If $\gamma$ is an outgoing/ingoing separatrix one the two extremities is a zero of $R$ and the other is a pole of $R$, moreover when the separatrix is homoclinic/heteroclinic then both extremities are poles of $R$.

\begin{rem}\label{rem:degree2}
If  $R$  is a polynomial of degree 2  using our definition of the separatrix graph \eqref{def:separatrix_graph} we obtain  that $\Gamma_R = \emptyset$ since  there are not any separatrix trajectory.  Moreover, the phase portrait of $R$ is well understood depending on the roots of $R$. Thus,  $\dot{z}=R(z)$ is conformally conjugate to $\dot{z}= i (z-1)(z-1)$ where  $\hat{ \mathbb C} \setminus \{\pm 1\}$ is foliated by isochronous periodic orbits, or to $\dot{z}= (z+1)(z-1)$ where $\hat{ \mathbb C} \setminus \{\pm 1\}$ is foliated by orbits going from -1 to +1, or to $\dot{z} = z^2$ where $\hat{ \mathbb C} \setminus \{0\}$ is foliated by orbits going from 0 to 0.
\end{rem}

From the above remark and hereafter we will  assume that the degree of $R$ is bigger or equal than three and thus $\Gamma_R \neq \emptyset$. The separatrix graph $\Gamma_R$ is the boundary between trajectories with different properties. According the Markus Theorem (\cite{DifferentialEquationsDynamicalSystems}, Thm. {2, Sec. 3.11) the separatrix graph
$\Gamma_R$ is closed, so their complement $\CC \setminus \Gamma_R$ is open. Following   \cite{ClassificationComplexPolynomial,VecteursPolynomiaux}   the connected components of  $\CC \setminus \Gamma_R$ are called  {\it zones} or {\it canonical regions}. Moreover, in every  zone  all the trajectories are equivalent and could be classified into four types. See Figure \ref{fig:zones}.

\begin{figure}[htb!]
    \centering
     \includegraphics[width=0.71\textwidth]{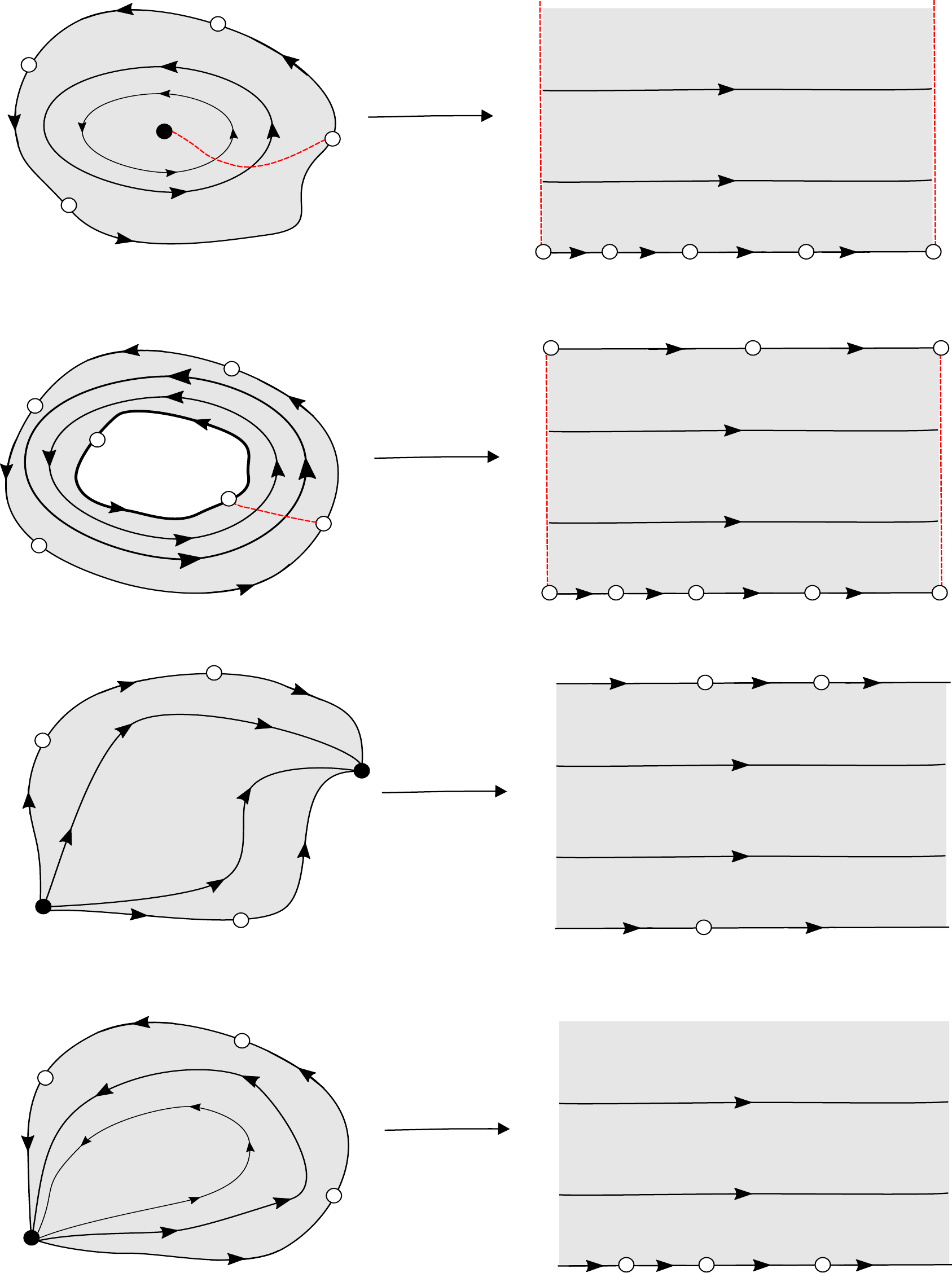}
   \put(-282,394) {\small $z_0$}
          \put(-217,396) {\small $w_1$}
          \put(-192,416) {\small $\phi$}
          \put(-253,441) {\small $w_2$}
          \put(-341,433) {\small $w_3$}
          \put(-321,370) {\small $w_4$}
          \put(-127,350) {\small $w_2$}
         \put(-152,350) {\small $w_1$}
          \put(-97,350) {\small $w_3$}
          \put(-57,350) {\small $w_4$}
         \put(-12,350) {\small $w_1$}
         \put(-82,400) {\small $\mathcal{C}$}
          \put(-387,360) {\small center zone}
          \put(-112,430) {\small half infinite cylinder}
          \put(-217,260) {\small $w_1$}
                \put(-190,295) {\small $\phi$}
          \put(-247,319) {\small $w_2$}
          \put(-336,315) {\small $w_3$}
          \put(-331,246) {\small $w_4$}
          \put(-297,292) {\small $w_6$}
          \put(-267,279) {\small $w_5$}
          \put(-150,228) {\small $w_1$}
         \put(-125,228) {\small $w_2$}
          \put(-95,228) {\small $w_3$}
          \put(-55,228) {\small $w_4$}
         \put(-10,228) {\small $w_1$}
          \put(-150,334) {\small $w_5$}
          \put(-78,334) {\small $w_6$}
          \put(-10,334) {\small $w_5$}
         \put(-80,275) {\small $\mathcal{A}$}
          \put(-387,230) {\small annular zone}
        \put(-100,305) {\small finite cylinder}
          \put(-213,165) {\small $z_1$}
          \put(-185,175) {\small $\phi$}
          \put(-249,117) {\small $w_1$}
          \put(-90,112) {\small $w_1$}
          \put(-325,117) {\small $z_0$}
           \put(-334,193) {\small $w_2$}
           \put(-261,219) {\small $w_3$}
          \put(-90,215) {\small $w_2$}
          \put(-50,215) {\small $w_3$}
            \put(-80,155) {\small $\mathcal{S}$}
        \put(-100,187) {\small strip}
           \put(-394,130) {\small parallel zone}
          \put(-213,23) {\small $w_1$}
          \put(-247,86) {\small $w_2$}
          \put(-328,3) {\small $z_0$}
          \put(-332,78) {\small $w_3$}
          \put(-118,-6) {\small $w_1$}
          \put(-90,-6) {\small $w_2$}
          \put(-50,-6) {\small $w_3$}
          \put(-183,58) {\small $\phi$}
            \put(-80,45) {\small $+\mathbb H$}
                        \put(-100,70) {\small half plane}
             \put(-394,3) {\small elliptic zone}
           \caption{\small{The  different types of zones: center, annular, parallel, and elliptic; and their corresponding image under the rectifying coordinates $\phi$: half infinite cylinder,  finite cylinder, strip, and half plane.  The points labeled  $w_0, \ldots, w_3$ are poles of $R$ (saddles) and  $z_0$ and $z_1$ are zeroes of $R$ (sinks, sources, centers or elliptic points).}  }
    \label{fig:zones}
    \end{figure}

\begin{itemize}
\item A {\it center zone} $C$ contains  an equilibrium point $z_0$ which is a center. From the topological point of view $C\setminus \{ z_0\} $ is a doubly connected region conformally isomorphic to $\D \setminus \{0\}$.  Thus  $C \setminus \{z_0\}$ has infinite modulus $\mod(C\setminus \{ z_0\})=\infty$. Moreover, every center zone $C$  is foliated by periodic orbits of the same period except the equilibrium point $z_0$. See Figure \ref{fig:zones} first row.

\item An {\it annular zone} $A$  is   a doubly connected region with finite conformal modulus $0<\mod(A)<+\infty$ also foliated by periodic orbits of the same period. Each boundary component of $A$ is formed by one or several homoclinic/heteroclinic connections.  The value $\mod(A)$ is a conformal invariant of the double connected region $A$. However, deforming continuously the separatrix graph we can modify the value of  $\mod(A)$ to any other value in $(0,+\infty)$. See Figure \ref{fig:zones} second row.

\item A {\it parallel zone} $P$ is a simply connected region containing two different equilibrium points on the boundary of $P$, denoted by  $z_0$ and  $z_1$, corresponding to  the $\alpha-$limit point and $\omega-$limit point for all trajectories in $P,$ respectively. The boundary of $P$ contains one or two incoming landing separatrices and one or two outgoing landing separatrices, and zero or more homoclinic/heteroclinic connections. See Figure \ref{fig:zones} third row.

\item An {\it elliptic zone} $E$ is a simply connected region containing exactly one equilibrium point $z_0$  on the boundary, which is both the $\alpha-$limit and $\omega-$limit for all trajectories. The boundary of $E$ consists of one incoming landing separatrix, one outgoing landing separatrix, and zero or more homoclinic/heteroclinic connections. See Figure \ref{fig:zones} fourth row.

\end{itemize}

In the case of  a polynomial vector field only center, elliptic, and parallel  zones exist (\cite{ClassificationComplexPolynomial}) since annular zones needs at least two poles of $R$ and polynomial vector fields only have a unique pole.

In each zone we can define the rectifying coordinates that globally conjugate  the vector field $\xi_R$ to the unit vector field $ 1 (d/dz)$. In any simply connected domain avoiding zeros of $R$, the differential $\frac{dz}{R(z)}$ has an antiderivative unique up to addition by a constant
\[
\phi(z)=\int_{r_0}^z \frac{dw}{R(w)}.
\]

Note that
\[
\phi_* (\xi_R) = \phi'(z) R(z) \frac{d}{dz} = \frac{d}{dz}.
\]

The coordinates $w=\phi(z)$ are, for this reason, called {\it rectifying coordinates}. We will call the images of zones under rectifying coordinates {\it rectified zones}. The rectified zones are of the following types. See Figure \ref{fig:zones}.

\begin{itemize}
\item The image of a center zone (minus a curve contained in the zone which joins the center $z_0$ and a saddle equilibrium point in the boundary) under $\phi$ is a {\it  half infinite cylinder} $\mathcal C$. It could be either  an upper half infinite cylinder or a lower half infinite cylinder depending on the orientation of the closed trajectories in the center zone.  In Figure \ref{fig:zones} (first row) we show a center with the trajectories oriented counterclockwise and their corresponding upper half infinite cylinder.
\item The image of an annulus zone (minus a curve contained in the zone which joint two saddles  equilibrium points each one in the two different boundaries) under $\phi$ is a {\it  finite cylinder} $\mathcal A$. It could be either an upper  finite cylinder  or lower  finite cylinder depending  on the orientation of the trajectories in the annulus. In Figure \ref{fig:zones} (second row) we show an annulus with the trajectories oriented counterclockwise and their corresponding upper finite cylinder.
\item The image of a parallel zone under $\phi$ is a horizontal {\it strip} $\mathcal S$. See Figure \ref{fig:zones} (third row).
\item The image of an elliptic zone under $\phi$ is either an upper {\it half plane} $+\mathbb H$ or a lower half plane $-\mathbb H$. In Figure \ref{fig:zones} (fourth row) we show an elliptic zone  with the trajectories oriented counterclockwise and their corresponding upper half plane.
\end{itemize}

We turn our attention to $\infty$; this point can be interpreted as the north pole of the Riemann sphere $\hat{\mathbb C}$. The usual approach to investigate the
local phase portrait of \eqref{eq:Main} near $\infty$ is to consider the change of variables $w=1/z$. Thus $z=\infty$ for \eqref{eq:Main} becomes
 $w=0$ in the new variable. The local behavior of $\infty$ mainly depends  on the degrees of the polynomials $P$ and $Q$. Using this approach the point of $\infty$ could be a critical or a regular point.  We recall that $z_0$ is a regular point if  the rational flow \eqref{eq:Main} near  $z_0$  is conformally conjugated to $\dot{z}=1$ near the origin.

 We normalize  \eqref{eq:Main}  so that the point at infinity is always a regular point. We claim  that $\infty$ is a regular point if and only if $deg(P)=deg(Q)+2$. To see the claim  we assume that  $\infty$ is a regular point or  equivalently we assume that  the index at $\infty$ is equal to 0. From the Poincar\'e Index Theorem (\cite{DifferentialEquationsDynamicalSystems}, Thm. 8, Sec. 3.12) we have that
\[
\sum_{ c } I(c) = \chi (\CC)=2
\]

\noindent where the sum is taken over the critical points of the rational function $R(z)=P(z)/Q(z)$. Assuming the $\infty$ is not a critical point and using Proposition \ref{pro:index} we have that $\sum_{c}I(c)=n-m$, since every zero of $R$ has index equal to their multiplicity as a root of $P$ and every pole of $R$ has index equal to minus their multiplicity as a root of $Q$. Concluding thus that when $\infty$ is not a critical point then $n=m+2$. In the next proposition we prove that  we can always assume that $\infty$ is a regular point.

\begin{pro}\label{prop:infinity}
 Any rational flow  $R \frac{d}{dz}$  can be conformally conjugated by a Mobius transformation to one with $deg(P)=deg(Q)+2$.
\end{pro}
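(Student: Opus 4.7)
The plan is to combine two facts visible in the preceding discussion: first, a Möbius transformation conjugates any rational vector field to another rational vector field while preserving the critical/regular nature of each point; second, the index argument sketched just before the statement shows that $\infty$ is a regular point of $\xi_R$ if and only if $\deg(P)=\deg(Q)+2$. Hence the whole problem reduces to exhibiting a Möbius map that sends some regular point of $\xi_R$ to $\infty$.

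First I would observe that the critical set of $\xi_R$ on $\CC$ is finite: it is contained in the union of the zeros of $P$, the zeros of $Q$, and possibly the point $\infty$, hence has at most $n+m+1$ elements. Since $\C$ is infinite, there exists a regular point $z_{0}\in\C$ of the flow. Choose any Möbius transformation $M\colon \CC\to\CC$ with $M(z_0)=\infty$, and form the conjugated flow $\xi_{\tilde R}:=M_*\xi_R$.

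Next I would verify that $\xi_{\tilde R}$ is indeed of the form $\tilde R(w)\,d/dw$ for a rational $\tilde R$. A short calculation gives
\[
\tilde R(w)=R\bigl(M^{-1}(w)\bigr)\,(M^{-1})'(w),
\]
which, after clearing the fractional substitution, is expressed as a quotient $\tilde R=\tilde P/\tilde Q$ of two polynomials; I would write $\tilde n=\deg(\tilde P)$ and $\tilde m=\deg(\tilde Q)$. Coprimality of $\tilde P$ and $\tilde Q$ follows because $M$ is a biholomorphism, so the zeros and poles of $\tilde R$ correspond bijectively under $M$ to those of $R$ with the same multiplicities, leaving no room for spurious common factors. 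Finally, since $M$ is conformal at $z_0$ and $z_0$ is regular for $\xi_R$, the image point $\infty=M(z_0)$ is regular for $\xi_{\tilde R}$.

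Applying now the argument of the paragraph that immediately precedes the statement to $\xi_{\tilde R}$: by the Poincar\'e Index Theorem, the sum of indices at all critical points of $\xi_{\tilde R}$ equals $\chi(\CC)=2$, and since $\infty$ is regular it contributes $0$, so Proposition \ref{pro:index} yields $\tilde n-\tilde m=2$, which is the desired normalization. The only step requiring actual care is the bookkeeping that rules out accidental cancellation between $\tilde P$ and $\tilde Q$ after the change of variables, and I expect this to be the main, if minor, obstacle; it is resolved once and for all by the bijectivity of $M$ together with the correspondence of multiplicities under conformal conjugacy.
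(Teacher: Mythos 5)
Your proof is correct, but it takes a genuinely different route from the paper's. The paper picks an explicit M\"obius map $M(z)=\frac{az}{z-\alpha}$ sending a regular point $\alpha$ (with $P(\alpha)Q(\alpha)\neq 0$) to $\infty$ and then \emph{computes} $\tilde R = M'(M^{-1}(w))\,R(M^{-1}(w))$ directly, reading off that the numerator has degree $n+m+2$ with leading coefficient a multiple of $P(\alpha)\neq 0$ and the denominator has degree $n+m$ with leading coefficient a multiple of $Q(\alpha)\neq 0$; the choice of $\alpha$ is exactly what guarantees no drop in degree. You instead make the degree count \emph{a posteriori} via the Poincar\'e Index Theorem: since $\infty=M(z_0)$ is regular for the conjugated field, the finite critical points carry total index $\tilde n-\tilde m$, which must equal $\chi(\CC)=2$. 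This is cleaner and avoids the explicit computation entirely, at the cost of invoking the index theorem (which the paper only uses as motivation in the paragraph before the proposition) and of Proposition~\ref{pro:index} applied to the new field. Two small remarks. First, your pushforward formula is off: for a vector field one has $\tilde R(w)=M'(M^{-1}(w))\,R(M^{-1}(w))=R(M^{-1}(w))/(M^{-1})'(w)$, not $R(M^{-1}(w))\,(M^{-1})'(w)$ --- you wrote the transformation law for a $1$-form; this is harmless here because your argument never uses the explicit formula. Second, your claim that the zeros and poles of the \emph{rational function} $\tilde R$ correspond under $M$ to those of $R$ with the same multiplicities is not literally true (the derivative factor contributes a double zero at $M(\infty)$ and a double pole at $M(\alpha)=\infty$; indeed that discrepancy of $2$ is the whole point of the proposition). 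What you actually need is only that $\tilde R$ is rational and can be written in lowest terms as $\tilde P/\tilde Q$, which is automatic, so coprimality is not the obstacle you feared and the index argument closes the proof.
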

\begin{proof}
  Pick any regular point $\alpha$ for $R$ not on a separatrix and such that $P(\alpha) Q(\alpha) \neq 0$, we also select $a \neq 0$  and let $M(z)=\frac{az}{z-\alpha}$ be the Mobius transformation which sends $\alpha \mapsto \infty$ and $\infty \mapsto a$. Then $(M)_{\ast}(R(z)\frac{d}{dz})=\tilde{R}(w)\frac{d}{dw}$, where
\begin{align*}
  \tilde{R}(w)= & M'(M^{-1}(w))\cdot R(M^{-1}(w))  \\
  = &  \frac{-a \alpha}{\left(\frac{\alpha w}{w-a}-\alpha \right )^2}\cdot \frac{a_n\left( \frac{\alpha w}{w-a} \right) ^n + \cdots +a_0}{b_m\left( \frac{\alpha w}{w-a} \right) ^m + \cdots +b_0}
\end{align*}
and multiplying numerator and denominator by $(w-a)^m(w-a)^n$ gives
\begin{align*}
 \tilde{R}(w)  = &  - \frac{\left({w-a} \right) ^2} {a \alpha}\cdot \frac{(w-a)^m}{(w-a)^n}\cdot \frac{a_n (\alpha w)^n+a_{n-1}(w-a)^1(\alpha w)^{n-1} + \cdots +a_0 (w-a)^n}{b_m (\alpha w)^m+b_{m-1}(w-a)^1(\alpha w)^{m-1} + \cdots +b_0 (w-a)^m} \\
  & =- \frac{\left({w-a} \right) ^{m+2}} {a \alpha \left( {w-a} \right)^{n}}\cdot \frac{ P(\alpha) w^n + \mathcal O (w^{n-1}) }{ Q(\alpha) w^m + \mathcal O (w^{n-1}) },
\end{align*}
where you can see that the degree of the numerator is $n+m+2$ and the degree of the denominator is $n+m$.
\end{proof}

Hereafter and without loss of generality we can assume that  our rational flow given by \eqref{eq:Main} is such that $deg(P)=deg(Q)+2$.
\begin{rem}
Given $P$ a polynomial we have that  $\dot{z}=P(z)$  is conformally conjugate, via a Mobius transformation, to $\dot{z}=\tilde{P}(z)/(z-a)^{n-2}$
where $deg(P)=deg(\tilde{P})$. So, polynomial flows write as $R=P/Q$ with $deg(P)=deg(Q)+2$ and such that $Q$ has a unique root. In a similar way,
every Newton's flow $\dot{z}=- P(z)/P'(z)$ is conformally conjugate to $\dot{z} = (z-a) \tilde{P}(z) / \tilde{Q}(z)$ where $deg(\tilde{P})=deg(P)$ and $deg(\tilde{Q})=deg(P)-1$.
In both cases the finite parameter $a \in \mathbb C$ plays the role of $\infty$.
\end{rem}

As mentioned before, the Markus Theorem asserts that the separatrix graph and  one orbit in each canonical region is enough to
characterize the flow modulo  topological equivalences. The separatrix graph alone is not enough; the next lemma exemplifies that different rational flows can share the same separatrix graph.

\begin{lem}
The rational vector fields  $\dot{z}= z^3/(z-1)$ and $\dot{z}= i (z^2-1)(z^2-4)/(z^2+9)$  have an equivalent separatrix graph.
\end{lem}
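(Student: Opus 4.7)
The plan is to compute the separatrix graph of each vector field explicitly via the rectifying coordinate $\phi(z)=\int dz/R(z)$ and then verify that the two resulting planar directed graphs coincide. In each case both saddle values of $\phi$ will turn out to lie on the real axis, so the separatrix graph is exactly the level set $\{\mathrm{Im}\,\phi(z)=0\}$.

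For the first flow $\dot z=z^3/(z-1)$ the unique saddle is the simple pole $z=1$ and $z=0$ is the elliptic point of index $3$. Direct integration yields $\phi(z)=\frac{1}{2z^2}-\frac{1}{z}$, single-valued on $\CC\setminus\{0,1\}$ with $\phi(1)=-1/2\in\R$. Writing $z=re^{i\theta}$ one finds
\[
\mathrm{Im}\,\phi(z)=\frac{(r-\cos\theta)\sin\theta}{r^2},
\]
so $\{\mathrm{Im}\,\phi=0\}$ splits as the real axis (including $\infty$) together with the circle $r=\cos\theta$, i.e.\ $|z-1/2|=1/2$. These two closed curves on $\CC$ meet only at $0$ and $1$, producing four separatrix arcs and hence a planar graph with two vertices and four edges.

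For the second flow $\dot z=iR(z)$ with $R(z)=(z^2-1)(z^2-4)/(z^2+9)$, the four simple zeros $\pm 1,\pm 2$ are centers (since $R$ has real coefficients, $(iR)'(\alpha)=iR'(\alpha)\in i\R$ at each real zero $\alpha$) and the saddles are the simple poles $\pm 3i$. Partial fractions give
\[
\phi(z)=-i\bigl[\tfrac{5}{3}\log((z+1)/(z-1))+\tfrac{13}{12}\log((z-2)/(z+2))\bigr],
\]
whose imaginary part $\mathrm{Im}\,\phi(z)=-\tfrac{5}{3}\log|(z+1)/(z-1)|-\tfrac{13}{12}\log|(z-2)/(z+2)|$ is single-valued on $\CC\setminus\{\pm 1,\pm 2\}$, and direct evaluation shows $\phi(\pm 3i)\in\R$. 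The imaginary axis (where both moduli equal $1$) therefore lies in $\{\mathrm{Im}\,\phi=0\}$, and linearising at each pole exhibits two vertical heteroclinic arcs $-3i\to 3i$, one through $0$ and one through $\infty$, consuming all four vertical half-separatrices. Since separatrices cannot land at centers and all vertical half-edges at $\pm 3i$ are already used, the remaining four horizontal half-separatrices cannot close into homoclinic loops; the flow-reversing symmetry $z\mapsto -z$, which swaps $\pm 3i$, then pairs them into two further heteroclinic arcs $3i\to -3i$, one on each side of the imaginary axis. This again produces a planar graph with two vertices and four edges.

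Both separatrix graphs are therefore directed planar graphs in $\CC$ with exactly two vertices of valence four, four edges joining them, four canonical regions, and a cyclic order of half-edges at each vertex that alternates between incoming and outgoing; an isomorphism between them is then immediate. The hard part will be the global analysis of the second flow: ruling out homoclinic loops for the horizontal half-separatrices without grinding through the implicit equation $(|z+1|/|z-1|)^{20}(|z-2|/|z+2|)^{13}=1$, which is sidestepped by the half-edge counting and $z\mapsto -z$ symmetry argument outlined above.
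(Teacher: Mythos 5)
Your proof is correct, but it reaches the conclusion by a more explicit route than the paper. The paper's proof is purely qualitative: it identifies the equilibria and their types (an order-$3$ elliptic point and a simple saddle in the first case; four centers and two simple saddles in the second), counts the four separatrices at each saddle, and then determines the global connection pattern by eliminating the other possible zone types, concluding that $\CC\setminus\Gamma_1$ consists of four elliptic zones and $\CC\setminus\Gamma_2$ of four center zones, so that in both cases the graph has two vertices of valence four joined by four edges. You instead compute the rectifying coordinate $\phi$ explicitly: for the first flow the first integral $\mathrm{Im}\,\phi=(r-\cos\theta)\sin\theta/r^2$ exhibits the graph concretely as the real axis together with the circle $|z-\tfrac12|=\tfrac12$, and for the second flow you combine the invariance of the imaginary axis with a clean slot-counting argument (both incoming half-edges at $3i$ are consumed by the vertical heteroclinics, so the outgoing ones cannot be homoclinic and must land at $-3i$). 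Your version buys rigor and concreteness where the paper's ``all the other possibilities are excluded'' is terse, at the cost of computations that do not generalize; the paper's version is the one that scales to the general admissibility argument of Section 4. Two cosmetic points: $\phi$ in the second case is multivalued because of the logarithms, so the statement ``$\phi(\pm 3i)\in\R$'' should be phrased as $\mathrm{Im}\,\phi(\pm 3i)=0$ for the single-valued function $\mathrm{Im}\,\phi$ (which is all you use); and the identification of the separatrix graph with the full level set $\{\mathrm{Im}\,\phi=0\}$ implicitly uses that no periodic orbit or elliptic-sector trajectory sits at level $0$, which your explicit description of the level set as four arcs ending at the saddle makes immediate in the first case. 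Note also that the two graphs, while isomorphic as directed planar graphs (which is what the lemma asserts), differ once the paper's black/white vertex coloring is imposed -- that is precisely the moral the paper draws from this lemma.
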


\begin{proof}
We observe that in both cases  $\infty$ is  a regular point since $deg(P)=deg(Q)+2$.
We first consider the rational flow given by  $\dot{z}= \frac{z^3}{z-1}$ and we denote by $\Gamma_1$ its  separatrix graph. The phase portrait of   $\dot{z}= \frac{z^3}{z-1}$ exhibits an elliptic point  located at the origin of multiplicity $3$ and a saddle point at $z=1$ of multiplicity $1$. Moreover, In both cases the local phase portrait around each equilibrium point is given by four separatrices, two incoming and two outcoming.  We have that $\hat{\mathbb C} \setminus \Gamma_1$ is formed by four elliptic zones since all the other possibilities are excluded. So, every separatrix trajectory connects the saddle point with the multiple point and the complement is formed by four simply connected regions.

We secondly consider the rational flow given by  $\dot{z}= i \frac{(z^2-1)(z^2-4) }{z^2+9}$ and we denote by $\Gamma_2$ its separatrix graph. Simple computations show that this rational vector field has four centers located at $z = \pm 1$ and $z= \pm 2$ and two simple saddle points at $z= \pm 3i$. Moreover, every simple saddle point has four separatrix trajectories, two incoming and two outcoming. Thus, the phase portrait has four center zones and the boundary of every center zone is formed by two heteroclinic connections. So, every separatrix trajectory connects the two simple saddles.
 \begin{figure}[ht]
    \centering
     \includegraphics[width=0.55\textwidth]{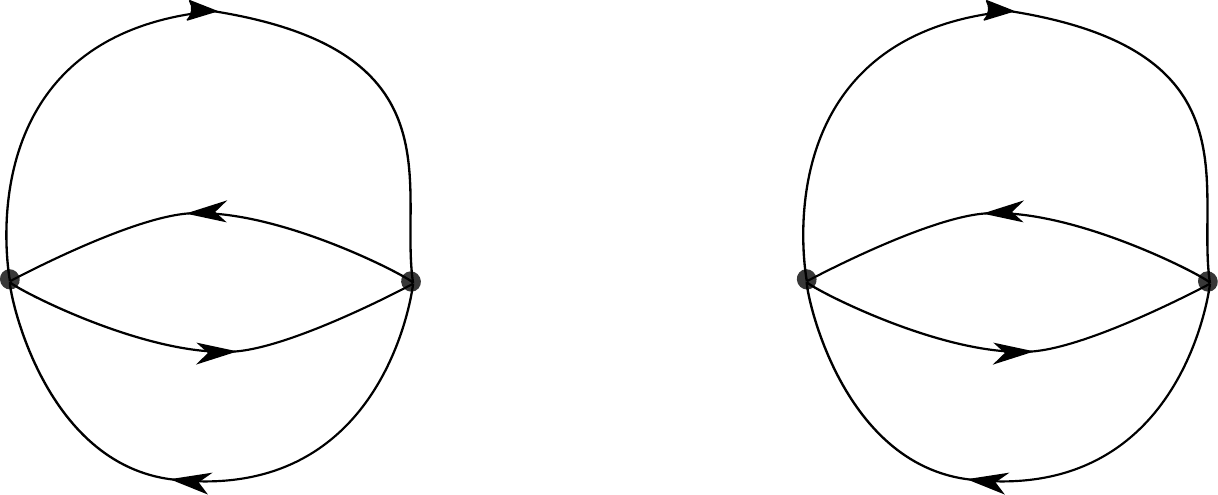}
    \put(-165,43) {\small 1 }
    \put(-270,43) {\small 0 }
    \put(-265,90) {\small $\Gamma_1$ }
    \put(-105,43) {\small -3i  }
        \put(3,43) {\small 3i  }
                \put(-2,90) {\small $\Gamma_2$  }
           \caption{\small{ Sketch of the separatrix graph of the rational vector fields $\dot{z}= z^3 / (z-1)$ (left)  and $\dot{z}= i (z^2-1)(z^2-4)/(z^2+9)$ (right).  In the first case the complement of separatrix graph  is formed by four elliptic zones (left)  and in the second case by four center zones (right). This demonstrates that the separatrix graph alone is not enough to determine topologically distinct phase portraits.}}
    \label{fig:example}
    \end{figure}

\end{proof}

In order to avoid this difficulty where different flows  share the same separatrix graph, we color the vertices of the separatrix graph rather than including an orbit in each region. More precisely,
we  distinguish between saddle points (white vertices)  and the rest of the critical points (black vertices).

\section{Definition of an admissible graph} \label{sec:admissible_graph}

In this section we will describe the conditions of an embedded planar directed graph to be the separatrix graph of a rational vector field. Roughly speaking an {\it admissible} graph is a planar and directed graph that looks like a separatrix graph of \eqref{eq:Main}.

We first introduce some notion of graphs. We denote the planar and directed graph by $\Gamma=\{V,E\}$, where $V$ is the set of vertices and $E$ the edges. In the set $V$ we distinguish between two kind of vertices: white and black vertices. We will see later that white vertices will correspond to saddle points  of the vector fields while black vertices will correspond to sink, sources and elliptic points of the vector field.

Given $ p \geq 0, \, q \geq 1$ and $k \geq 2$  we define the set of vertices and edges of $\Gamma$ by,
\begin{equation}\label{eq:vertices_edges}
\begin{array}{ll}
V & = \{b_1, \cdots, b_p, w_1, \cdots, w_q\} \\
E & = \{e_1, \cdots, e_k \, \}.
\end{array}
\end{equation}

\noindent where  every oriented edge $e_i=(x_i,y_i)$ starts at the vertex $x_i \in V $ and finish at the vertex $y_i \in V$, for $ 1 \leq i \leq k$;  an edge $e=(x,y)$ is called a {\it loop} in the case that $x=y$. For every vertex $x $ we define the {\it valence} $v(x)$  as the number of edges at the vertex  $x$. In the case that the graph  $\Gamma$ exhibits a loop $e=(x,x)$, then this loop contributes 2 to the valence of the vertex $x$.  In a similar way for every vertex $x$ we define the {\it cyclic reversals }$r(x)$ as the number of reversals, from edges starting at $x$ to edges finishing at $x$, when you made a turn around $x$. We notice that the valence counts the number of edges at the vertex, while the  cyclic reversal only counts the number of  incoming and outgoing edges at the vertex.  See Figure \ref{fig:valence_change}.

\begin{figure}[ht]
    \centering
     \includegraphics[width=0.45\textwidth]{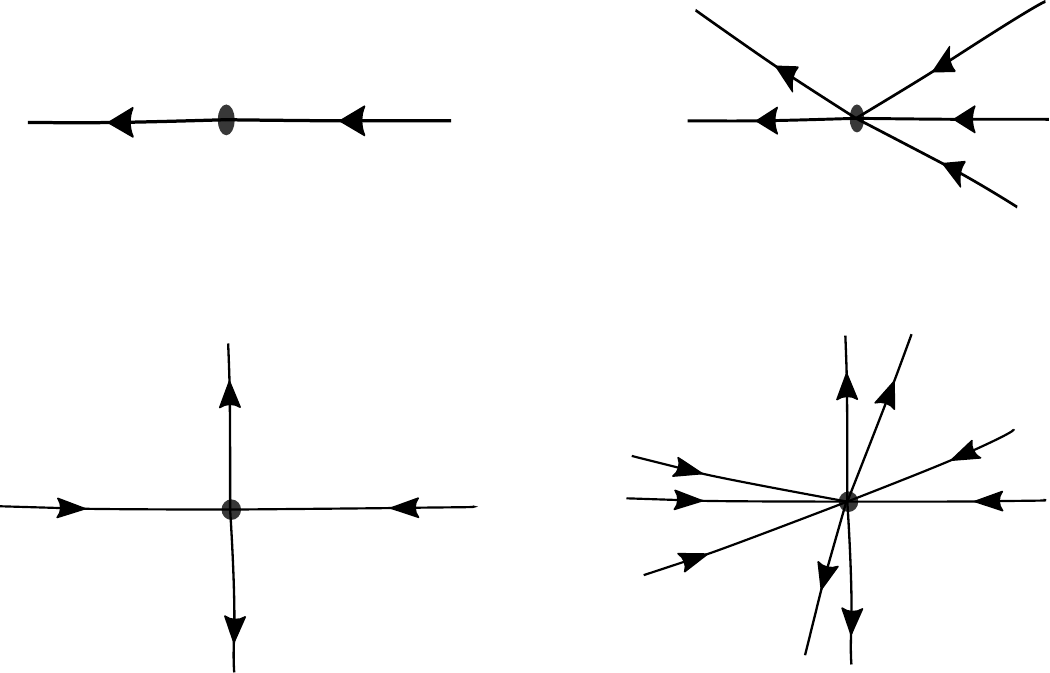}
    \put(0,108) {\footnotesize valence = 5 }
    \put(0,98) {\footnotesize reversals=2 }
   \put(-260,108) {\footnotesize valence = 2  }
   \put(-260,98) {\footnotesize reversals = 2  }
      \put(-260,38) {\footnotesize valence=4 }
      \put(-260,28) {\footnotesize reversals = 4}
     \put(0,18) {\footnotesize valence= 9 }
      \put(0,8) {\footnotesize reversals= 4}
           \caption{\small{Four examples counting  the  valence and  reversals of a concrete vertex. }}
    \label{fig:valence_change}
    \end{figure}

Given a rational vector field \eqref{eq:Main} we easily have the following properties at the critical points.  Firstly, center points do not belong to the separatrix graph. Furthermore, if $z_0$ is a saddle point of order $k \geq 1$ then $v(z_0)=r(z_0)=2(k+1)$, if $z_0$ is a multiple root of $R$ of order $k\geq2$ then $r(z_0)=2(k-1)$ and finally if $z_0$ is a source or a sink of $R$ then  $r(z_0)=0$. We notice that a priori we do not know what is the valence of a concrete root (simple or multiple) of $R$ from its local behavior.

We also introduce the following quantities associated to the graph $\Gamma$. We denote by $\mathcal V = \displaystyle  \sum_{i=1}^q v(w_i)$ the total valence at the white vertices and $\mathcal R =\displaystyle  \sum_{i=1}^p r(b_i)$ the total cyclic reversals  at the black vertices.

 We can define an admissible graph $\Gamma$ as a planar and directed graph with the main properties of the separatrix graph of a rational vector field. We notice that the separatrix graph of a rational vector fields  involves three main ingredients. The first one is the local behavior at the vertices, the second one is the Poincar\'e index formula and finally what kind of domains are allowed at  the complement of $\Gamma$.

\begin{defn} \label{def:admissible}
Let $ p \geq 0, \, q \geq 1$ and $k \geq 2$ three natural numbers. We consider  $\Gamma$  a planar and directed graph $\Gamma = \{ V,E\}$ where $V = \{ b_1, \cdots, b_p, w_1, \cdots, w_q\}$ are the vertices and $E  = \{e_1, \cdots, e_k \, \}$ are the edges. We say that $\Gamma$ is an admissible graph if and only if the following conditions are satisfied

\begin{enumerate}[label=(\alph*)]
\item There are not isolated vertices.
\item Every edge is  incoming or outgoing  from a  white vertex.
\item Every white vertex $w$ verifies that $v(w)=r(w)$ is an even number greater than or equal to 4.
\item Every black vertex $b$ verifies that $r(b)$ is an even number greater than or equal to 0.
\item $\displaystyle  p + \frac{\mathcal R }{2}  \leq  2 -q + \frac{\mathcal V} {2}\qquad$  (Poincar\'e index formula)
\item  The complement of $\Gamma$ is formed by simply connected  and doubly connected regions.  If $N+1$ is the number of connected components of $\Gamma$, then  $\mathbb C \setminus \Gamma$ has $N $ annular regions.  Every annular region is doubly connected whose boundary is formed by white vertices, and both boundary components have the same orientation. See figure \ref{fig:ringregion}.
\item  There are $c:= \frac{ \mathcal V }{2}  -q +2 - \left( p + \frac{\mathcal R}{2}  \right) \geq 0 $ center regions. Every center region is simply connected and  has boundary which is an oriented cycle formed by white vertices.  See figure \ref{fig:centerregion}.  
\item There are $\mathcal R$ elliptic regions. Every elliptic region is  simply connected  and whose boundary is a cycle of exactly one black vertex and one or several white vertices, oriented from the unique black vertex to itself. See  figure \ref{fig:halfregion}.

 \item The rest of components of $\mathbb C \setminus \Gamma$ are formed by parallel regions.  Every parallel region is simply connected  whose boundary contains two black vertices and two oriented paths (not necessarily disjoint) both going from one black vertex to the other black vertex.   See figure \ref{fig:stripregion}.
\end{enumerate}

\end{defn}

\begin{lem}\label{lem:admissible_rat}
Let $\Gamma$ be the separatrix graph of a rational vector field \eqref{eq:Main}, then $\Gamma$ is an admissible graph.
\end{lem}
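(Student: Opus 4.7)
My plan is to set up the graph structure and then verify conditions (a)--(i) of Definition~\ref{def:admissible} one by one, using Proposition~\ref{pro:index}, the zone classification in Section~\ref{sec:preli}, and the Poincar\'e--Hopf index theorem. The vertex set will be the critical points of $\xi_R$ that lie on $\Gamma_R$, with white vertices the poles of $R$ and black vertices the remaining non-center critical points (sinks, sources, and multiple zeros); the edges are the individual separatrix trajectories, closed by appending their two end-limits. Centers do not belong to $\Gamma_R$ and will be counted separately.

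Conditions (a)--(d) are local and I would read them off Proposition~\ref{pro:index} directly. A saddle of order $k\geq 1$ is surrounded by $2(k+1)$ alternating hyperbolic sectors, so $v(w)=r(w)=2(k+1)\geq 4$, and every separatrix lands at a saddle, which gives (b). A sink or source has only incoming (or only outgoing) landing separatrices, hence $r(b)=0$; a multiple zero of order $k\geq 2$ has $2(k-1)$ elliptic sectors, each contributing one incoming and one outgoing landing separatrix in cyclic alternation, hence $r(b)=2(k-1)$. For (e) I would apply Poincar\'e--Hopf on $\CC$, using that $\infty$ is regular (Proposition~\ref{prop:infinity}), and rewrite each index as graph data: saddles contribute $q-\mathcal V/2$ in total, black vertices contribute $p+\mathcal R/2$ (their index is $r(b)/2+1$ in both subcases), and centers contribute $+1$ each. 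Setting the total equal to $2$ forces the center count to be
\[
c=\frac{\mathcal V}{2}-q+2-\Bigl(p+\frac{\mathcal R}{2}\Bigr)\geq 0,
\]
which simultaneously gives (e) and the formula appearing in (g).

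Conditions (g)--(i) then follow by matching each connected component of $\CC\setminus\Gamma_R$ to one of the four zones listed in the preliminaries: center zones are in bijection with centers (yielding $c$ of them), each elliptic sector at a multiple zero produces a unique elliptic zone (yielding $\sum_b r(b)=\mathcal R$ in total), and the remaining components are parallel zones. The boundary descriptions and orientation statements demanded in Definition~\ref{def:admissible} were already recorded in the zone list, with the orientations coming by pulling back via the rectifying coordinate $\phi$ the canonical orientation of the corresponding cylinder, strip, or half-plane model.

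The one step I expect to require real work is condition (f), a global topological count. The zone list already guarantees that every component of $\CC\setminus\Gamma_R$ is either simply connected or doubly connected, so what remains is to show that if $\Gamma_R$ has $N+1$ connected components then there are exactly $N$ annular regions. My plan is to establish the following purely topological lemma: if $X\subset \CC$ is a finite embedded graph whose complementary components are all disks or annuli, then the number of annular components is one less than the number of components of $X$. The induction step would take one annular face and cut it by a transverse arc joining its two boundary circles; this both turns the annulus into a disk and, because a connected planar graph on the sphere has only disk complementary faces, must merge two distinct components of $X$ into one. Once this lemma is in hand, the remaining bookkeeping is routine and the admissibility of $\Gamma_R$ is established.
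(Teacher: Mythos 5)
Your proposal is correct and, for conditions (a)--(e) and (g)--(i), follows essentially the same route as the paper: (a)--(d) are read off Proposition~\ref{pro:index}, the identification of the complementary components with the four zone types gives (g)--(i), and the inequality (e) together with the center count $c=\frac{\mathcal V}{2}-q+2-(p+\frac{\mathcal R}{2})$ comes from an index count. (The paper phrases (e) as a degree count, namely $\frac{\mathcal V}{2}=\deg(Q)+q$ and $p+\frac{\mathcal R}{2}\leq\deg(P)$ with $\deg(P)=\deg(Q)+2$, whereas you apply Poincar\'e--Hopf directly; since that normalization is itself obtained from the index theorem, the two computations are identical.)

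Where you genuinely diverge is condition (f). The paper dismisses (f)--(i) with the remark that they ``follow from the four types of zones'' and never proves that $N+1$ components of $\Gamma_R$ force exactly $N$ annular complementary regions; its later Lemma~\ref{Eulerlemma} essentially takes that count as a hypothesis. Your topological lemma fills this gap and is true, and the cut-an-annulus induction works, with two small repairs: the merging step is best justified by observing that an annular face separates the sphere into two pieces, one containing each boundary circle, so the two circles lie in distinct components of the graph (the fact you cite about connected graphs having only disk faces is the converse of what is needed there); and the termination step requires that converse explicitly, i.e.\ that an embedded graph on $S^2$ all of whose faces are disks is connected. Alternatively, the count follows in one line from additivity of the Euler characteristic, $2=v-e+\sharp\{\text{disk faces}\}$, combined with Euler's formula $v-e+f=1+(N+1)$ for a planar graph with $N+1$ components, which yields $\sharp\{\text{annular faces}\}=N$. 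Either way, your treatment of (f) is more complete than the paper's.
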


\begin{proof}
We denote by $\Gamma$ the separatrix graph of $\dot{z}=R(z)$. We color the  saddle points  white and sinks, sources, and multiple points  black.
We assume that the rational flow given by $R$ writes as  $P(z)/Q(z)$, with $deg{(P)}=n$ and $deg{(Q)}=n-2$ (see Proposition \ref{prop:infinity})  and $n \geq 3$ (see Remark \ref{rem:degree2}). Since $deg(Q) \geq 1$, then we have at least one white vertex, $q \geq 1$, and at least two edges, $k \geq 2$. The first four properties of the definition of an admissible graph are trivially satisfied from the local behavior at critical points (see Proposition \ref{pro:index}).

We derive property (e) from Poincar\'e index formula.  Firstly,  we deal with saddle points (white vertices).  By assumption every saddle point corresponds to a root of the polynomial  $Q$. Writing
\[
Q(z)= \prod_{i=1}^q (z-w_i)^{d_i},
\]
\noindent  we have that
\begin{equation} \label{eq:PoincareBendixon1}
 \mathcal V =\displaystyle \sum_{i=1}^q v(w_i)=  \sum_{i=1}^q 2 (d_i +1 ) =  2 (n-2 + q),
\end{equation}
since a root $w_i$ of multiplicity $d_i$ has a valence equal to $2(d_i+1)$ (see Proposition \ref{pro:index}) and $deg(Q)=n-2$.

Secondly, we deal with sinks, sources and elliptic points (black vertices) which are roots of $P$. However, in that case we need to take into account center points (if there are any) since they are  roots of $P$  but not vertices of $\Gamma$. The number of sinks, sources and elliptic points verify
\begin{equation} \label{eq:PoincareBendixon2}
 \sum_{i=1}^p \left ( \frac{r(b_i)}{2} +1 \right )  = \frac{\mathcal R}{2} + p  \leq n,
\end{equation}
since they are roots of $P$ and $\deg(P)=n$. Furthermore, we have that the number of center zones is exactly $n- \frac{\mathcal R}{2} - p$.

Finally, combining Equations \eqref{eq:PoincareBendixon1} and \eqref{eq:PoincareBendixon2} we obtain  the desired inequality
\[
 p +  \frac{\mathcal R}{2}   \leq 2-q  + \frac{\mathcal V}{2}.
 \]

The rest of the properties follow from the four types of zones discussed in \S \ref{sec:preli}.  We only check the number of center regions (g). From Equation \eqref{eq:PoincareBendixon2} the number of center is equal to $n- \left( p +  \frac{\mathcal R}{2}  \right)$. Replacing $n$ from Equation  \eqref{eq:PoincareBendixon1} we obtain that the number of center zones is exactly
\[
c:= \frac{\mathcal V}{2} -q +2 - \left( p +  \frac{\mathcal R}{2}  \right).
\]

\end{proof}

\section{ Characterization of an admissible graph as a separatrix graph} \label{sec:admissible_separatrix}

The goal of this section is to prove Theorem A which states which planar graphs correspond to the separatrix graph of a rational flow. More precisely, Theorem A states that a planar and directed graph $\Gamma$ corresponds to the separatrix graph of a rational vector field if and only if it is an admissible graph. The precise definition of an admissible graph is given in \S \ref{sec:admissible_graph}. Almost trivially, any rational vector field must have separatrix graph satisfying these conditions, since the conditions were designed with the rational separatrix graph in mind (See Lemma \ref{lem:admissible_rat}).

Now we will show that the admissibility conditions are enough. The steps of the proof are listed here, and proving each step will follow.
\begin{enumerate}
\item
Build rectified zones from $\Gamma$, and glue these together to create a rectified surface $\M^{\ast}$ with punctures. See \S \ref{subsec:M_ast}.
\item
Construct an atlas for $\M^{\ast}$ to show that it is a Riemann surface, and use the charts around the punctures to define the closure $\M$. See \S \ref{subsec:M}.
\item
Use the Euler characteristic to show that $\M$ is homeomorphic to $\hat{\mathbb{C}}$, and the Uniformization Theorem then gives that it is, in fact, isomorphic to $\hat{\mathbb{C}}$. See \S \ref{MisoC}.
\item
Endow $\M^{\ast}\setminus \{w_1,\dots,w_q \}$ with the vector field $\frac{d}{dz}$ in the natural charts, and extend this holomorphically to the vector field $\xi_{\M}$, holomorphic on $\M \setminus \{w_1,\dots,w_q \}$ and meromorphic on $\M$.  See \S \ref{assvfs}.
\item With all these in our hands we can prove Theorem A. We show that the induced vector field   must be  a rational  vector field of the form \eqref{eq:Main} with $\deg(P)=\deg(Q)+2$. See \S \ref{finalpf}.

\end{enumerate}

\subsection{ The rectified surface $\M^{\ast}$ with punctures} \label{subsec:M_ast}

\subsubsection{Building zones from the metric graph}
Let $\Gamma = \{ V,E\}$ be an admissible graph with $q$ white vertices $w_1,  \ldots, w_q$, $p$ black vertices $b_1, \ldots, b_q$; and $k$ edges denoted by $e_1, \ldots,e_k$ (see \S \ref{sec:admissible_graph} for details).

We will  construct the four different types of rectified zones as a building blocks of the rectified surface $\M^{\ast}$:  {\it half-infinite cylinders} $\mathcal{C}$, {\it finite cylinders} $\mathcal{A}$,  {\it half-planes} $\pm \mathbb{H}$, and {\it strips} $\mathcal{S}$. We will first explain their topology, and afterwards explain their geometry, since it turns out that choosing the lengths of the edges joining white vertices on the boundaries is not as straightforward as one would initially imagine.\par

For each simply connected component of $\hat{\mathbb{C}}\setminus \Gamma$ which is bounded by a counterclockwise (resp. clockwise) oriented cycle with $N$ edges connecting $M$ white vertices, construct an upper (resp. lower) half-infinite cylinder $\mathcal{C}=\mathbb{H}_{\pm} / L \mathbb{Z}$, where the oriented cycle is on $\mathbb{R} / L \mathbb{Z}$ and $L$ is the sum of the $N$ segment lengths between the white vertices to be determined (see \S \ref{subsec:lengths}). Thus given any $L>0$ the above construction of the  half-infinite cylinder $\mathcal C$ depends on $L$.  See Figure \ref{fig:centerregion}.

\par
\begin{figure}[ht]
    \centering
     \includegraphics[width=0.85\textwidth]{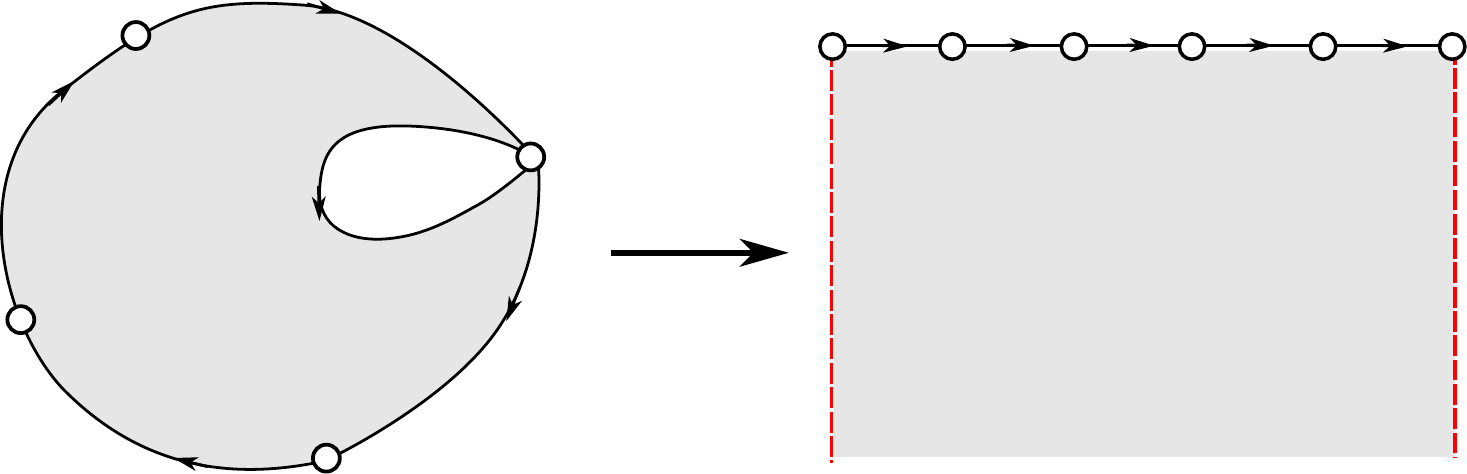}
    \put(-381,127) {$w_1$ }
    \put(-255,90) {$w_2$ }
    \put(-320,-7) {$w_3$ }
    \put(-417,40) {$w_4$ }
    \put(-180,125) {$w_1$ }
    \put(-150,125) {$w_2$ }
    \put(-115,125) {$w_2$ }
    \put(-85,125) {$w_3$ }
    \put(-50,125) {$w_4$ }
    \put(-10,125) {$w_1$ }
    \put(-130,75)  {half-infinite cylinder }
    \put(-95,45) {$\mathcal C$}
           \caption{ Example of a simply connected region bounded by a clockwise oriented cycle formed by  four white vertices  and five edges  (left), and the rectified zone $\mathcal C$ a half-infinite cylinder (right).}
    \label{fig:centerregion}
    \end{figure}
For each doubly-connected region, construct a finite cylinder $\mathcal{A}$ as follows (see Figure \ref{fig:ringregion}).
\begin{figure}[ht]
    \centering
     \includegraphics[width=0.85\textwidth]{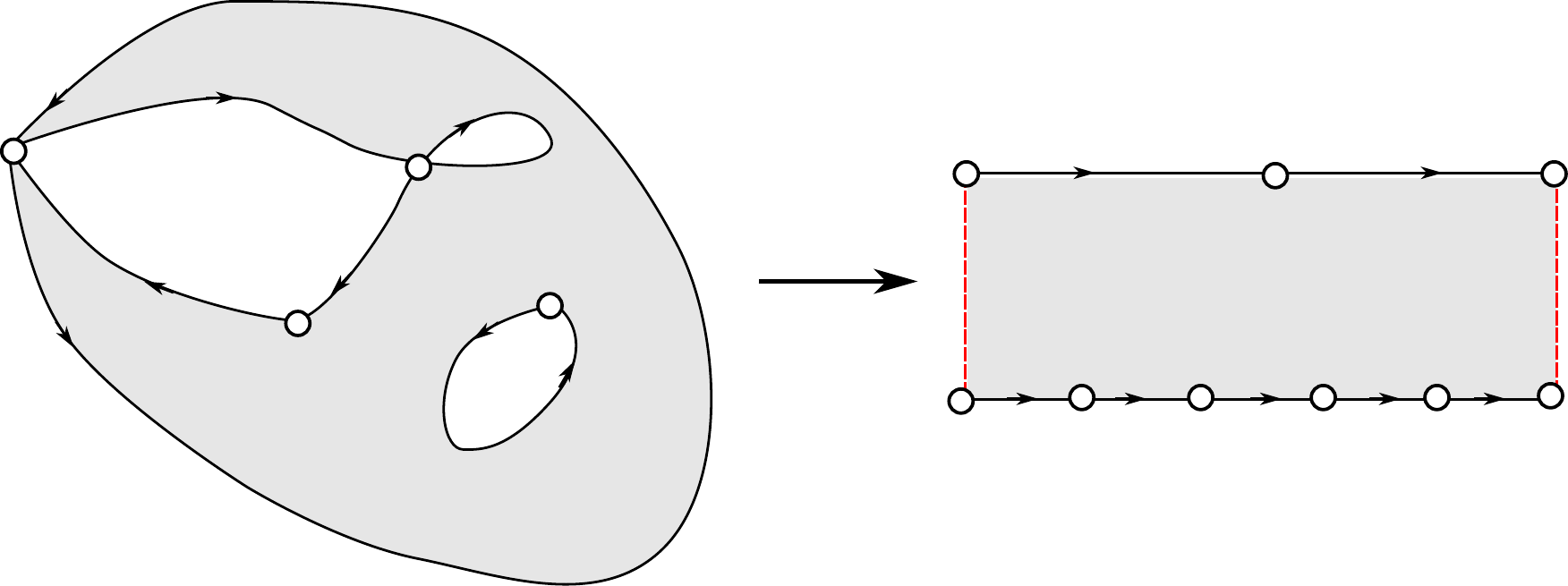}
    \put(-395,108) {$w_1$ }
    \put(-305,116) {$w_2$ }
    \put(-329,55) {$w_3$ }
    \put(-265,78) {$w_4$ }
    \put(-290,25) {$w_5$ }
    \put(-160,112) {$w_4$ }
    \put(-80,112) {$w_5$ }
    \put(-10,112) {$w_4$ }
    \put(-160,38) {$w_1$ }
    \put(-130,38) {$w_2$ }
    \put(-100,38) {$w_2$ }
    \put(-70,38) {$w_3$ }
    \put(-40,38) {$w_1$ }
    \put(-10,38) {$w_1$}
     \put(-85,84) {$\mathcal A$ }
          \put(-110,70) {finite cylinder }
           \caption{ Example of a doubly connected component bounded by two clockwise oriented cycles, one formed by two white vertices and two edges and the other formed by four vertices and five edges (left), and a corresponding rectified zone $\mathcal A$ a finite cylinder (right). }
    \label{fig:ringregion}
    \end{figure}
 Take the oriented boundary component which is an oriented cycle containing $N_1$ edges connecting  $M_1$ white vertices  
where the annulus is to the left of the boundary component and send it to $\mathbb{R} / L\mathbb{Z}$, where $L$ is the sum of the $N_1$ segment lengths between the white vertices to be determined (see \S \ref{subsec:lengths}).
Take the oriented boundary component where the annulus is to the right and send it to $\mathbb{R} /L \mathbb{Z}+i$, where $L$ is the same as for the other boundary component. This is a finite cylinder with height 1 and circumference $L$. As before this construction could be done for every $L>0$.   As we mention before, we can deform continuously the separatrix graph such that  $\mod (\mathcal A)= L$. Note that there is a choice of shear, the relative positions of the white vertices on the upper and lower boundaries.

\begin{figure}[ht]
    \centering
     \includegraphics[width=0.85\textwidth]{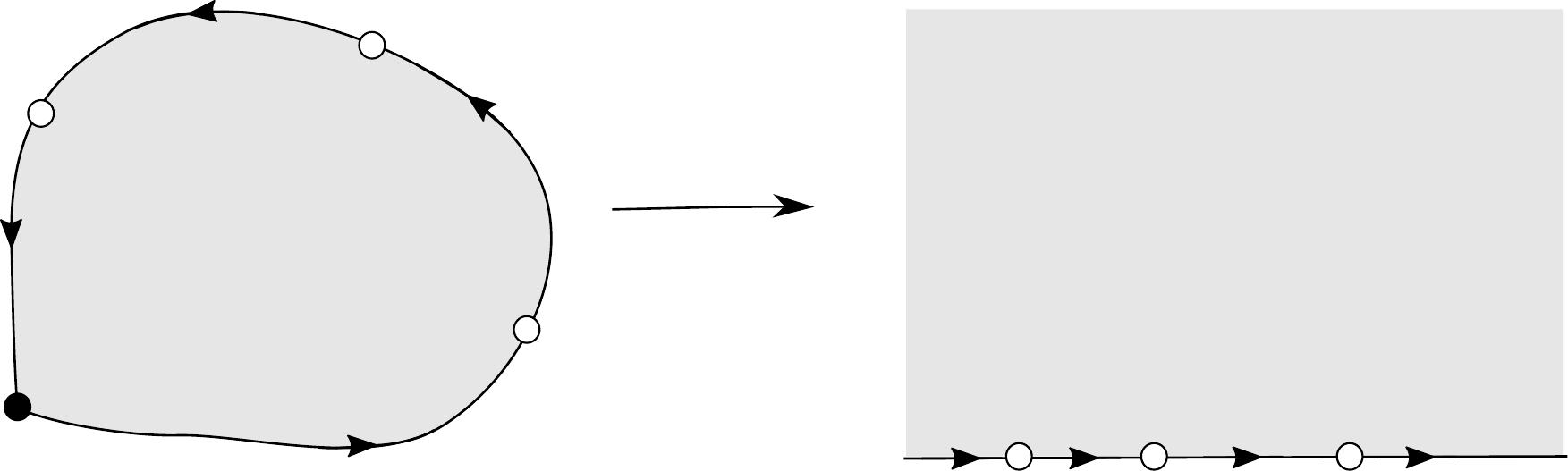}
    \put(-390,18) {$b_1$ }
    \put(-300,110) {$w_2$ }
    \put(-260,36) {$w_1$ }
    \put(-415,86) {$w_3$ }
    \put(-145,-7) {$w_1$ }
    \put(-109,-7) {$w_2$ }
    \put(-63,-7) {$w_3$ }
        \put(-90,55) {$\mathbb H_+$}
                \put(-100,70) {half-plane}
           \caption{Example of simply connected component with exactly one black vertex on the boundary and  three  white vertices forming a counterclockwise cycle with four edges (left), and their corresponding rectified zone $\mathbb H_+$ a half-plane (right).}
    \label{fig:halfregion}
    \end{figure}

For each simply connected component with boundary oriented from the unique black vertex to itself, map this boundary to the real line such that the orientation of the boundary is from $-\infty$ to $+\infty$ and such that  $\pm \infty$ correspond to the black vertex. Take the upper half-plane $\mathbb{H}_+$ (resp. lower half-plane $\mathbb{H}_-$) if the component is to the left (resp. right) of its boundary.\par
For each simply connected component with exactly two black vertices on the boundary, the boundary consists of two sets of edges (not necessarily disjoint) which connect one black vertex to the other, respecting the orientation of the edges (see Figure \ref{fig:stripregion}).
\begin{figure}[ht]
    \centering
     \includegraphics[width=0.85\textwidth]{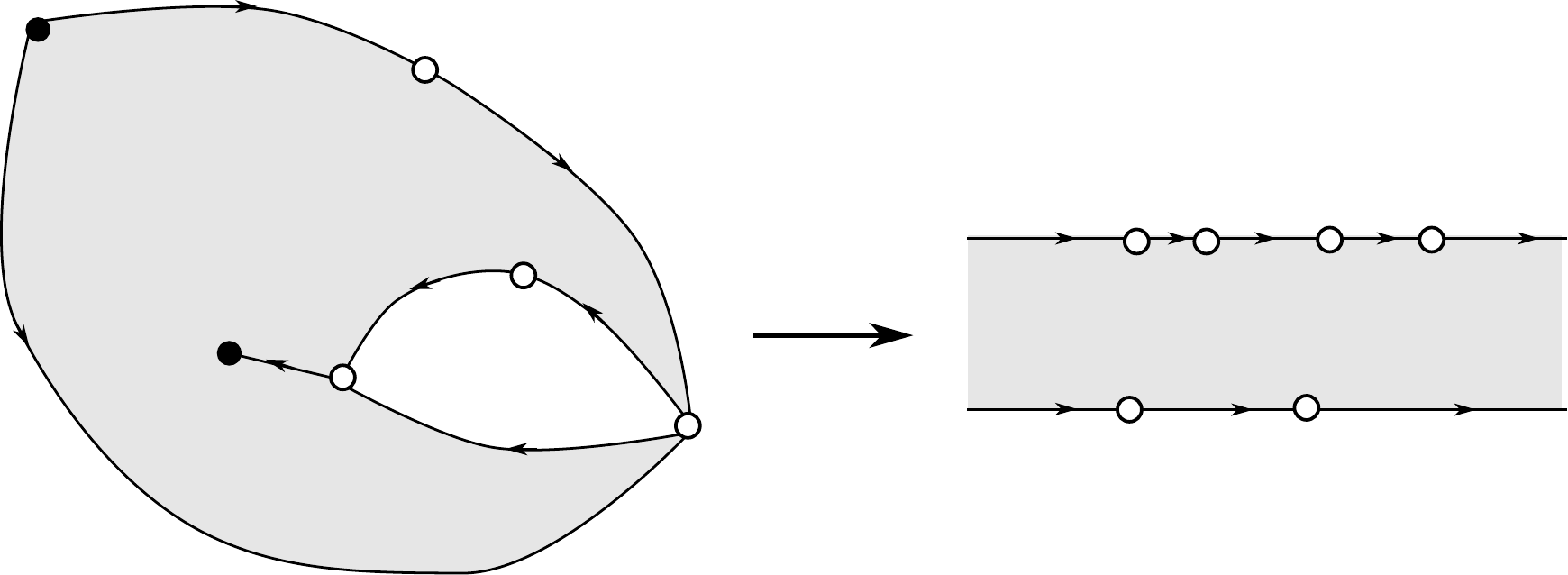}
    \put(-390,128) {$b_1$ }
    \put(-350,63) {$b_2$ }
    \put(-300,136) {$w_1$ }
    \put(-245,40) {$w_2$ }
    \put(-270,80) {$w_3$ }
    \put(-310,50) {$w_4$ }
    \put(-120,95) {$w_1$ }
    \put(-100,95) {$w_2$ }
    \put(-65,95) {$w_3$ }
    \put(-40,95) {$w_4$ }
        \put(-80,55) {$\mathcal S$ }
                \put(-90,70) {strip }
    \put(-115,30) {$w_2$ }
    \put(-70,30) {$w_4$ }
           \caption{Example of simply connected component with exactly two black vertices  and four white vertices  on the boundary, the boundary consists of two sets of edges  which connect one black vertex to the other. More precisely,  $ b_1 \to w_1 \to w_2 \to w_3 \to w_4 \to b_2$ and  $b_1 \to w_2 \to w_4 \to b_2$ (left); and the corresponding rectified zone $\mathcal S$ a strip (right).}
    \label{fig:stripregion}
    \end{figure}
Construct a horizontal infinite strip with height 1 such that the upper and lower boundaries are horizontal lines, each corresponding to the two boundary sets of edges. There is again a choice of  relative positions of the white vertices on the upper and lower boundaries. \par
\subsubsection{Assigning Lengths}\label{subsec:lengths}
We now discuss how to set the lengths of the heteroclinic and homoclinic segments on the boundaries of these regions. It would be simplest to set all of these lengths equal to 1, but this cannot (usually) be done, as will now be explained.  Each annular region (if they are any) has two boundary components, say one which has $N_1$ edges and the other has $N_2$ edges. The lengths of these boundary components need to be equal, since the trajectories in annular regions are isochronous. Since in general $N_1 \neq N_2$, setting each edge length equal to 1 would give one boundary component length $N_1$ and the other boundary component length $N_2$. Even though the lengths cannot usually all be set to equal 1, the following result shows that there does exist some consistent assignment of lengths for every admissible graph.
\begin{pro}
\label{lengthprop}
For every admissible graph, let $x_1,\dots,x_n$ be the lengths of the edges that connect white vertices to white vertices.  There exists assignment of positive numbers to $x_1,\dots,x_n$ such that for each of the $N$ annuli, the  lengths of each of the two boundary components are equal.
\end{pro}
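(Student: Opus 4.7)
The statement is a feasibility problem for a homogeneous linear system. For each annulus $A_j$ ($j=1,\ldots,N$), condition~(f) of Definition~\ref{def:admissible} ensures that its two boundary components $B_j^1,B_j^2$ consist entirely of white--white edges, so the required equality $\sum_{e\in B_j^1}x_e-\sum_{e\in B_j^2}x_e=0$ is a linear relation in $x_1,\ldots,x_n$. These $N$ equations assemble into a matrix equation $Ax=0$ with $A\in\{-1,0,+1\}^{N\times n}$. The plan is to invoke Stiemke's theorem of the alternative: a strictly positive solution $x\in\R^n_{>0}$ with $Ax=0$ exists if and only if there is no $y\in\R^N$ with $A^Ty\geq 0$ componentwise and $A^Ty\ne 0$, so it suffices to rule out any such dual certificate $y$.

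Given a hypothetical dual certificate $y$, I extend it to a function $\phi$ on the faces of $\CC\setminus\Gamma$ by $\phi(A_j)=y_j$ on annular faces and $\phi(F)=0$ on the non-annular (center, elliptic, parallel) ones. With the convention that $L(e)$ and $R(e)$ are the faces on the left and right of the oriented edge $e$ (using its $\Gamma$-orientation and the standard orientation of $\CC$), the entries of $A^Ty$ take the form $\delta_e:=\phi(L(e))-\phi(R(e))$. The dual condition then reads $\delta\geq 0$ and $\delta\ne 0$. A short verification --- tracing $\phi$ around a small counter-clockwise loop at any vertex $v$ and using that $\phi$ returns to its initial value --- yields the conservation law $\sum_{e\text{ outgoing at }v}\delta_e=\sum_{e\text{ incoming at }v}\delta_e$, so $\delta$ is a non-negative circulation on $\Gamma$. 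By the flow-decomposition theorem there is then a directed cycle $C\subset\Gamma$ with $\delta_e>0$ on every $e\in C$.

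The main obstacle is to derive a contradiction from the existence of $C$; this is where the topology of $\CC=S^2$ enters. Since $\CC$ is simply connected, $C$ bounds a disk $D$ on its left, and $L(e)\in D$, $R(e)\in\CC\setminus\overline D$ for every $e\in C$. A discrete maximum principle, obtained by iterating $\phi(L)\geq\phi(R)$ across edges interior to $D$, implies that the set $U\subset D$ of faces achieving $\max\phi|_D$ is closed under ``right-to-left'' propagation. If $\max\phi|_D>0$ then $U$ consists entirely of annular faces; since each annular face has Euler characteristic $0$ and is glued to neighboring faces along circles, $\overline U$ has Euler characteristic $0$ and hence is a topological annulus inside $D$, leaving a proper inner disk $D'\subsetneq D$. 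Recursing the argument on $D'$ produces a strictly nested sequence $D\supsetneq D'\supsetneq D''\supsetneq\cdots$ with strictly decreasing maxima; by admissibility conditions~(c)--(i) together with Euler's formula on $\CC$, at least one non-annular face exists, and the recursion must terminate at a disk whose maximum value of $\phi$ is $0$. A symmetric argument applied to $\CC\setminus\overline D$ using minima yields $\min\phi\geq 0$ on faces adjacent to $C$ from the opposite side. Together these inequalities force $\phi(L(e))\leq 0\leq\phi(R(e))$ on $C$, contradicting $\delta_e>0$.

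Hence $\delta\equiv 0$, so $\phi$ is constant on the (connected) face-adjacency graph of $\CC\setminus\Gamma$; combined with $\phi=0$ on any non-annular face this forces $\phi\equiv 0$, hence $y=0$, contradicting $A^Ty\ne 0$. Stiemke's alternative now produces the desired strictly positive $x\in\R^n_{>0}$.
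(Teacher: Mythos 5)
Your route is genuinely different from the paper's: you reduce to feasibility of the homogeneous system $Ax=0$ and invoke Stiemke's theorem of the alternative, whereas the paper verifies the hypotheses of Dines's elimination procedure by tracking three combinatorial invariants of the system through the induction. Your setup is sound as far as it goes: the identity $(A^Ty)_e=\phi(L(e))-\phi(R(e))$ correctly uses condition (f) (each annulus has one boundary component of each orientation type), the vertex conservation law for $\delta$ is valid (white--black edges automatically have $\delta_e=0$ since no annular face meets a black vertex), and flow decomposition does produce the directed cycle $C$. The final contradiction $\phi(L(e))\le 0\le\phi(R(e))$ against $\delta_e>0$ would close the argument \emph{if} you had established $\max\phi|_D\le 0$ and $\min\phi|_{\CC\setminus\overline D}\ge 0$.

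The gap is in the topological step meant to deliver those bounds. First, the assertion that $\overline U$ has Euler characteristic $0$ because annular faces are ``glued to neighboring faces along circles'' is false: two annular faces can share a single edge --- the paper's own example of two non-nested annuli sharing the edge $x_6$ (Figure \ref{fig:sharing_edges}) is exactly this configuration --- and gluing two annuli along an arc gives $\chi=-1$; moreover even $\chi(\overline U)=0$ would not force $\overline U$ to be an annulus, since $\overline U$ need not be a subsurface. Second, the recursion is incoherent as stated: right-to-left propagation \emph{preserves} the value $M=\max\phi|_D$, so the maxima over your nested disks cannot be ``strictly decreasing,'' and terminating at some deep sub-disk whose maximum is $0$ says nothing about $\phi(L(e))$ for $e\in C$, which is what the contradiction requires. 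What your argument actually needs is a well-foundedness lemma: if an annular face $A'$ lies across the right-type boundary component of an annular face $A$, then the closed disk cut off by the right-type boundary of $A'$ (on the side away from $A'$) is strictly contained in the corresponding disk for $A$. Granting this, the propagation relation has no cycles, so starting from a face of maximal value $M>0$ one must eventually reach a non-annular face, where $\phi=0\ge M$ is an immediate contradiction (and the detour through $C$ becomes unnecessary); the symmetric argument handles the minimum. That nesting lemma is provable via the Jordan curve theorem and the fact that an annular face separates its two boundary components, but it is the heart of the matter and is absent from your proof.
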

The proof, relegated to the Appendix, will apply a result by Dines \cite{PositiveSolutionsLinear} regarding the existence of solutions to systems of linear equations which have all positive components.

\subsubsection{Definition of $\M^{\ast}$}
Utilizing admissible $\Gamma$ and Proposition \ref{lengthprop}, we construct half-infinite cylinders $\mathcal{C}_i$, finite cylinders $\mathcal{A}_i$, half-planes $\pm \mathbb{H}$, and strips $\mathcal{S}_i$ as subsets in $\mathbb{C}$. We define
\begin{equation}
\mathcal{M}^{\ast}=\left(\mathcal{C}_i\sqcup \mathcal{A}_i\sqcup \pm \mathbb{H} \sqcup \mathcal{S}_i \right)/\sim,
\end{equation}
where $\sim$ is the equivalence relation  such that all points corresponding to the same white vertex are identified, and each pair of points on the two occurrences of any separatrix (edge of $\Gamma$) are identified by isometry.\par
We will make charts of the neighborhoods of the boundary components of $\M^{\ast}$ to show that each corresponds to one point.  The natural $(p+c)$-point closure of $\M^{\ast}$ is denoted $\M$ and is called the \emph{rectified surface}. We notice that  these $p+c$ points correspond to the $p$ black points and the $c$ centers (see Definition \ref{def:admissible} (g))  that are omitted in  the construction of $\M^{\ast}$. The construction of the Riemann surface structure on $\M$ is contained in the construction below of an atlas.\par

\subsection{The Riemann surface $\M$} \label{subsec:M}

\subsubsection{An atlas for $\mathcal{M}$}\label{subsubsec:atlas}
We  show that $\M$ is a Riemann surface by constructing an atlas $(U_i,\eta_i)$ for $\M$ with holomorphic transition maps.
There are obvious charts over the interior of each half-infinite cylinders $\mathcal{C}_i$, finite cylinders $\mathcal{A}_i$, half-planes $\pm \mathbb{H}$, and strips $\mathcal{S}_i$, and the transition maps are, at worst, translations in $\mathbb{C}$.  It remains to show charts over:
\begin{itemize}
\item
points on edges  of $\Gamma$ (separatrices),
\item
the white vertices $\{ w_i \mid i=1,\dots,q\}$, and
\item
the punctures of $\M^{\ast}$.
\end{itemize}

We treat each case in turn. Firstly, for points on edges  we note the following. Since each edge is on the upper boundary of exactly one rectified zone and on the lower  boundary of exactly one rectified zone, we define a neighborhood of a point on a edge in the natural way: construct sufficiently small half-disks of the same radius in the upper and lower zone, and identify by isometry (see Figure \ref{fig:atlasseps}). \par
\begin{figure}[ht]
    \centering
     \includegraphics[width=0.3\textwidth]{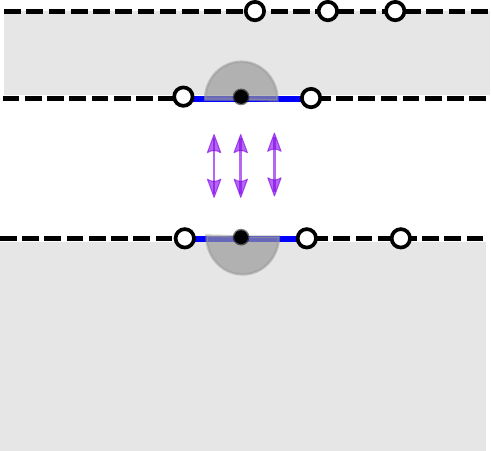}
         \put(-75,95) {{\footnotesize $z$}}
                  \put(-75,110) {{\small $D_1$}}
         \put(-75,66) {{\footnotesize $z$}}
                           \put(-75,48) {{\small $D_2$}}
           \caption{Sketch of the  construction of a neighborhood of a point $z$ in an edge of the admissible graph. }
    \label{fig:atlasseps}
    \end{figure}

Secondly, let  $w:=w_i$  be one of the $q$ white vertices of $\Gamma$. We know that the valence at $w$ is equal to  $2(m+1)$ with $m \geq 1$, since $\Gamma$ is an admissible graph (see Definition \ref{def:admissible} (c)). Thus, $w$ is on the boundary of $2(m+1)$ rectified zones:  $m+1$ on lower boundaries of half-planes, strips, or (finite or half-infinite) cylinders, and $m+1$ copies on upper boundaries. We define a chart  around each $w$ as follows. Let $D^{\pm}_{k}$, $k=1,\dots,m+1$ be the upper or lower semi-disk with center $w$ in either a strip, half-plane, half-infinite cylinder, or
finite cylinder, $+$  if an upper semi-disk and $-$ if
a lower semi-disk, and with radius $r$ sufficiently small. The neighborhood  of $w$ is then the half-disks taken with  proper identification, $\{ D^{\pm}_{k} \}/\sim$, since this maps univalently to a sufficiently small open disk in $\mathbb{C}$ by an $(m+1)$-st root.
In Figure \ref{fig:atlaspoles} we  sketch the situation  where $m=1$ which maps to a small disk in $\mathbb{C}$ under a suitable square root.

  \par
\begin{figure}[ht]
    \centering
     \includegraphics[width=0.95\textwidth]{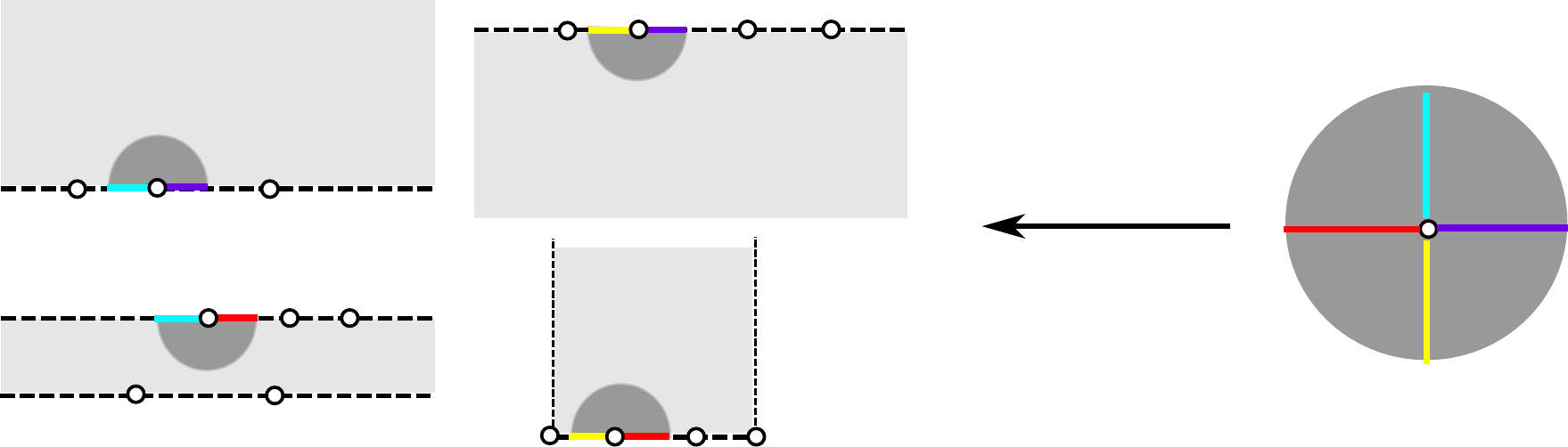}
    \put(-410,80) {$w$}
    \put(-410,95) {$D^+_1$}
    \put(-395,25) {$w$}
    \put(-380,20) {$D^-_1$}
    \put(-270,110) {$w$}
    \put(-265,100) {$D^-_2$}
    \put(-275,5) {$w$}
    \put(-255,10) {$D^+_2$}
    \put(-160,70) {$z \mapsto (z+w)^2$}
    \put(-30,75) {$D^+_1$}
    \put(-70,75) {$D^-_1$}
    \put(-70,40) {$D^+_2$}
    \put(-30,40) {$D^-_2$}
           \caption{ We sketch the construction of the neighborhood  of a  white vertex $w$ with valence  4, which corresponds to $m=1$. Thus $w$ is on the boundary of four rectified zones:  an upper half plane (top left), a strip  (bottom left),   a lower half plane (top right) and an upper half-infinite cylinder (bottom right). Suitable identification of these half-disks gives a neighborhood of $w$ in $\mathcal{M}$ which maps univalently to a disk in $\mathbb{C}$ under a suitable square root.}
    \label{fig:atlaspoles}
    \end{figure}
Thirdly, we now construct charts in neighborhoods $U \subset \M$ covering each boundary component of
$\M^{\ast}$.  The boundary components of $\M^{\ast}$ come from the black vertices of $\Gamma$ or the ends of half-infinite cylinders. The latter case is simpler, so we begin there. \par
 Each half-infinite cylinder $\mathcal{C}$ is conformally isomorphic to $\mathbb{D}\setminus \{ 0\}\subset \mathbb{C}$. This chart extends homeomorphically to the closure, by mapping the puncture $\infty_{[\mathcal{C}]}$   to $0\in \mathbb{C}$. We remember that there are $c$ cylinders and hence $c$ such punctures (see Definition \ref{def:admissible} (g)).\par
 Next, consider the boundary components of $\M^{\ast}$ which correspond to black vertices   in $\Gamma$   $\{ b_i \mid i=1,\dots,p\}$ which only have edges directed toward (resp. away from) the black vertex.  Let $b := b_i$ be one of the $p$ black vertices of $\Gamma$ with this property.  Each simply connected component with those edges on their boundaries must be a strip zone. Gluing those strips by the corresponding separatrices and truncating such that $\Re (z)>R$ (resp. $\Re (z)<R$) shows that a neighborhood $U_{[b]}$ of $b$ is a half-infinite cylinder to the right (resp. left). The conformal isomorphism $z \mapsto\exp\left( \frac{\mp 2\pi i}{\rho}z\right)$ maps this cylinder to a punctured neighborhood of $0\in \mathbb{C}$, where $\rho \in \mathbb{H}$ is the number that gives the height and shear of the identification. This local chart extends homeomorphically to its closure (see Figure \ref{fig:atlasstrips}). \par
\begin{figure}[ht]
    \centering
     \includegraphics[width=0.5\textwidth]{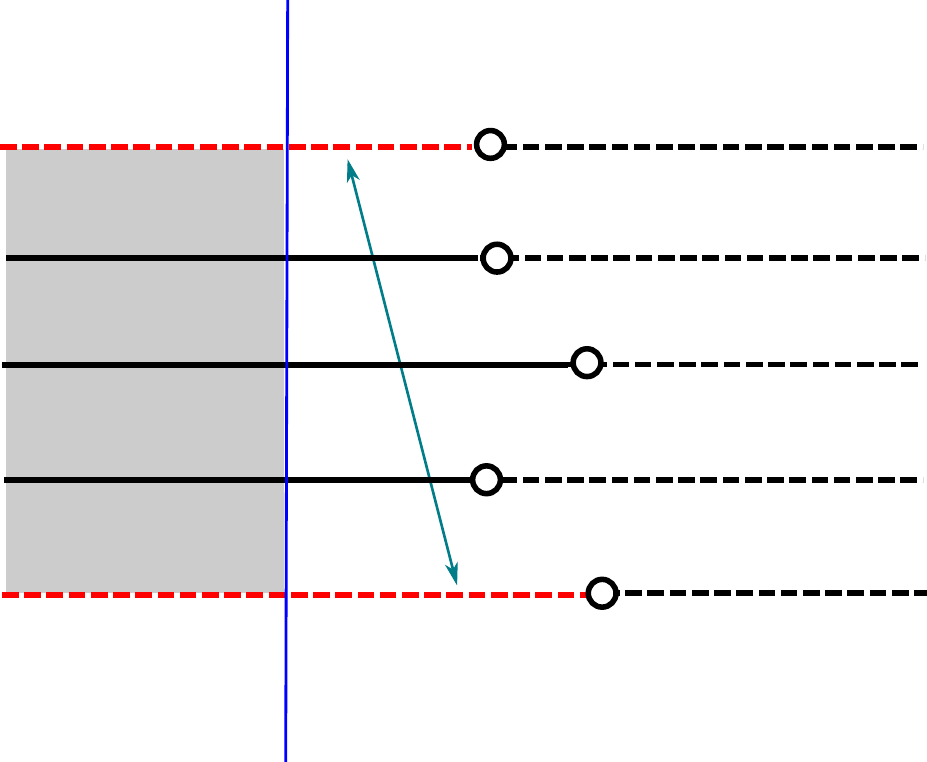}
    \put(-230,115) {$U_{[b]}$ cylinder}
    \put(-140,145) {$\rho \in \mathbb{H}$}
    \put(-160,20) {{\b $\Re(z)=R$}}
           \caption{ A neighborhood $U_{[b]}$ of $b:=b_i$, where $b_i \subset \Gamma$ is a black vertex with four edges in $\Gamma$ directed away from it.
             Gluing those strips by the corresponding separatrices and truncating gives that $U_{[b]}$  is a half-infinite cylinder to the left (the red dashed lines in the figure are identified by $\rho \in \mathbb{H}$). Therefore, $U_{[b]}$ is conformally isomorphic to $\mathbb{D}\setminus \{ 0\}\subset \mathbb{C}$, and the chart extends homeomorphically to the puncture.  }
    \label{fig:atlasstrips}
    \end{figure}
 The only case remaining is for a black vertex $b:=b_i$ which has adjacent edges directed both towards and away from it. There is a neighborhood $U_{[b]}$ of the boundary at infinity in rectifying coordinates that we will show  is doubly connected and has infinite modulus. Thus, showing the boundary is a single point.
 \begin{lem}
 \label{homotoplem}
 All simple, closed curves in $U_{[b]}$  are either homotopic to the bounded boundary component $\gamma_0$ or homotopic to a point.
 \end{lem}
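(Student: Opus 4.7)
The plan is to describe $U_{[b]}$ explicitly by gluing together the ``ends near $b$'' of the rectified zones adjacent to $b$, observe that the gluing produces a topological open annulus, and then invoke the standard fact that every simple closed curve in an annulus is either contractible or homotopic to a boundary component.

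Concretely, I would enumerate the $v := v(b)$ edges of $\Gamma$ incident to $b$ in the cyclic order $e_1, \dots, e_v$ given by the planar embedding, and let $Z_i$ denote the zone of $\CC \setminus \Gamma$ lying between $e_i$ and $e_{i+1}$ (indices modulo $v$). By Definition \ref{def:admissible}(h)--(i), each $Z_i$ is either an elliptic zone whose unique black vertex is $b$, or a strip zone in which $b$ is one of the two black vertices. The hypothesis that $b$ has edges pointing both towards and away from itself guarantees $v \geq 2$ and that at least two of the $Z_i$ are elliptic. In the rectifying coordinates on $Z_i$, the vertex $b$ corresponds to an ideal point at $\pm \infty$, so for $R > 0$ sufficiently large I define the \emph{$b$-end} of each zone: $Z_i^b := Z_i \cap \{|w|>R\}$ if $Z_i$ is elliptic, and $Z_i^b := Z_i \cap \{\mathrm{Re}(w)<-R\}$ or $Z_i \cap \{\mathrm{Re}(w)>R\}$ if $Z_i$ is a strip (depending on whether $b$ corresponds to $-\infty$ or $+\infty$). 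Each $Z_i^b$ is simply connected, with boundary consisting of a portion of $e_i$, a portion of $e_{i+1}$, an inner arc lying in the interior of $Z_i$, and an open side asymptotic to $b$. Then $U_{[b]}$ is obtained by gluing the $Z_i^b$ cyclically along the shared portions of the edges $e_i$.

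Each $Z_i^b$ is homeomorphic to a half-open rectangle $[0,1] \times (0,1]$, with the two vertical sides identified with portions of $e_i$ and $e_{i+1}$, the closed bottom edge $[0,1] \times \{1\}$ a piece of $\gamma_0$, and the open top $[0,1] \times \{0\}$ representing the asymptote toward $b$. Cyclic gluing of $v$ such rectangles along their vertical sides yields a space homeomorphic to $S^1 \times (0,1]$: an annulus whose bounded boundary component is $\gamma_0$ and whose other end corresponds to $b$. Since $\pi_1(U_{[b]}) \cong \mathbb{Z}$ is generated by the class of $\gamma_0$, every simple closed curve in $U_{[b]}$ is either nullhomotopic or essential, and an essential simple closed curve in an annulus is isotopic (hence homotopic) to either boundary circle, in particular to $\gamma_0$.

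The main subtlety is to verify that the cyclic gluing really does produce a single connected annulus rather than a more complicated surface with additional nontrivial loops. This reduces to the observation that the cyclic order of the $Z_i$ around $b$ inherited from the planar embedding of $\Gamma$ is exactly the order of the gluing, so the gluing diagram is a single $v$-gon of rectangles closing up to an annulus. With this in hand, the lemma follows from the elementary classification of simple closed curves in an open annulus.
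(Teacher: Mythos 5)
Your proof is correct, but it takes a genuinely different and more constructive route than the paper's. The paper argues softly: given a simple closed curve $\gamma_1$ not homotopic to $\gamma_0$, it produces a simple arc $\gamma_2$ joining $\gamma_0$ to the ideal boundary and disjoint from $\gamma_1$, observes that $U_{[b]}\setminus\gamma_2$ is simply connected, and concludes that $\gamma_1$ is nullhomotopic. You instead identify $U_{[b]}$ explicitly as a cyclic gluing of $v(b)$ half-open rectangles (the $b$-ends of the adjacent elliptic and strip zones), obtain a homeomorphism $U_{[b]}\cong S^1\times(0,1]$, and quote the classification of simple closed curves in an annulus. Your version is longer but self-contained, and it proves more: it establishes directly that $U_{[b]}$ is an open annulus, which is precisely the double-connectedness the paper extracts from the lemma immediately afterward (you would still need the paper's Gr\"{o}tzsch argument for the infinite modulus, since your identification is only topological). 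The paper's argument is shorter but leaves implicit both the existence of the arc $\gamma_2$ and the simple connectivity of the cut region --- facts that in effect require the same structural understanding of $U_{[b]}$ that you make explicit. Two small points would tighten your write-up: (i) justify that each zone adjacent to $b$ meets $b$ in exactly one sector (an elliptic zone because $b$ occurs exactly once in its boundary cycle, a strip because its two boundary paths meet $b$ only at their common endpoint), so the $v(b)$ sectors correspond bijectively to $v(b)$ rectangles; and (ii) note that the isometric identification of the two occurrences of each edge respects the time parametrization and hence sends the $b$-end of one occurrence to the $b$-end of the other, which is what makes the cyclic gluing close up into a single annulus rather than some other surface.
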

 \begin{proof}
   If $\gamma_1$ is not homotopic to the boundary component $\gamma_0$, then there exists a simple curve $\gamma_2$ which joins $\gamma_0$ to infinity, where $\gamma_1$ and $\gamma_2$ do not intersect (see Figure \ref{fig:homotopic}).
 \begin{figure}[ht]
    \centering
     \includegraphics[width=0.6\textwidth]{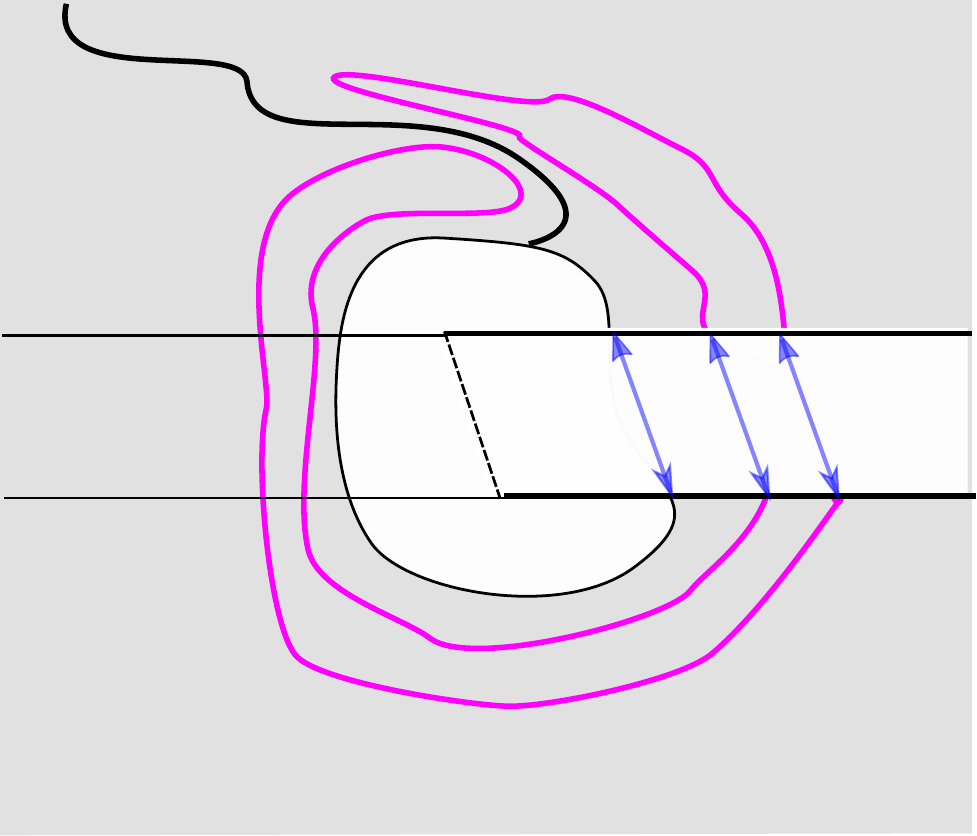}
    \put(-160,78) {$\gamma_0$ }
    \put(-280,228) {$\gamma_2$ }
    \put(-80,200) {$\gamma_1$ }
    \put(-35,228) {$U_{[b]}$  }
    \caption{If $\gamma_1$ is not homotopic to the boundary component $\gamma_0$, then there exists a simple  curve $\gamma_2$ which joins $\gamma_0$ to infinity, where $\gamma_1$ and the $\gamma_2$ do not intersect. The curve $\gamma_1$ must be homotopic to a point since it is entirely contained in the simply connected set $U_{[b]} \setminus \gamma_2$.}
    \label{fig:homotopic}
\end{figure}
   The set $U_{[b]} \setminus \gamma_2$ is simply connected. Therefore, $\gamma_1$ is homotopic to a point since it is contained entirely in a simply connected set.
 \end{proof}
 By Lemma \ref{homotoplem}, $U_{[b]}$ is doubly connected since every non-trivial, simple closed curve in $U_{[b]}$ is homotopic to $\gamma_0$.  We now use Gr\"{o}tzsch's inequality to show that this doubly connected region has infinite modulus.  Indeed, it is easy to see in rectifying coordinates that one can construct infinitely many disjoint annuli  contained in $U_{[b]}$ and homotopic to $\gamma_0$ which have modulus bounded away from zero since the $\mathbb{H}$ and strips are unbounded. Therefore, $\mod\left(U_{[b]}\right)=\infty$ since Gr\"{o}tzsch's inequality gives $\mod\left(U_{[b]}\right) \geq \sum_{i=1}^{\infty}\mod (A_i)$, where the latter must be infinite since each modulus $\mod (A_i)$ is bounded away from zero. Therefore, $U_{[b]}$   is conformally isomorphic to $\mathbb{D}^{\ast}$  and can be extended homeomorphically by mapping the puncture $\infty_{[b]}$ to $0\in \mathbb{C}$.
\par

Once we have defined a neighborhood $U$ for every point in  $\mathcal M$, we notice that all transition maps are translations or compositions of translations with conformal isomorphisms.
Therefore, we have made an atlas  with holomorphic transition maps on the Hausdorff space $\M$, and we can conclude
that $\M$
is a Riemann
surface.
\subsection{$\M$ isomorphic to $\CC$}
\label{MisoC}
In this section we prove the following result.
\begin{pro}\label{homeoprop}
$\M$ is homeomorphic to $\CC$.
\end{pro}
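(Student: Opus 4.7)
The plan is to prove $\M$ is a compact, connected, orientable surface of Euler characteristic $2$, so that the classification of closed orientable surfaces forces $\M\cong S^2\cong\CC$. Compactness follows from the $(p+c)$-point closure in \S\ref{subsec:M_ast}, orientability from the holomorphic atlas of \S\ref{subsubsec:atlas}, and connectedness from the fact that $\Gamma$ is planar: the zones are in bijection with the faces of the planar embedding of $\Gamma$ in $\CC$, two zones sharing an edge of $\Gamma$ are glued along it in the construction of $\M^\ast$, and the face-adjacency graph of a planar embedding in $\CC$ is connected.

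The remaining task is the Euler characteristic computation, for which I would use additivity of the compactly supported Euler characteristic under a locally closed partition. Decompose $\M$ into the $p+q$ vertices of $\Gamma$, the $c$ adjoined centers, the $k$ open edges of $\Gamma$, and the open zones. Using $\chi_c(\mathrm{pt})=1$, $\chi_c(\R)=-1$, $\chi_c(\R^2)=1$, and $\chi_c(S^1\times\R)=0$, the elliptic zones (rectified to $\HH_\pm$) and parallel zones (rectified to a strip) are open disks contributing $+1$ each, while the finite-cylinder zones are open annuli contributing $0$, and a center zone with its center deleted is again an open annulus contributing $0$. This yields
\begin{equation*}
\chi(\M)=(p+q+c)-k+\mathcal{R}+P,
\end{equation*}
where $P$ denotes the number of parallel zones.

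A slick way to evaluate this is to run the analogous computation inside $\CC$ using the planar embedding of $\Gamma$. There the $c$ centers are interior points of the center regions (not vertices), so those regions contribute $+1$ each, giving
\begin{equation*}
2=\chi(\CC)=(p+q)-k+(c+\mathcal{R}+P).
\end{equation*}
Subtracting the two identities makes the $c$-terms cancel and yields $\chi(\M)=\chi(\CC)=2$. Combined with connectedness, compactness, and orientability, the classification of closed orientable surfaces finishes the proof.

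The main obstacle is the careful topological bookkeeping underlying this calculation: identifying the correct topological type of each rectified zone before and after the closure (a half-infinite cylinder becomes a disk only once its center is adjoined, and reverts to an annulus once that center is singled out as a $0$-cell; a strip becomes a disk only after its two boundary points at $\pm\infty$ are added), and verifying that the planar embedding of $\Gamma$ in $\CC$ really does have $c+N+\mathcal{R}+P$ faces of the types prescribed by conditions (f)--(i). Neither step is deep, but both require close attention to the data packaged in Definition~\ref{def:admissible}.
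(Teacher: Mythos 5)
Your argument is correct and rests on the same underlying idea as the paper --- show $\chi(\M)=2$ and conclude via surface classification --- but the implementation is genuinely different and noticeably more complete. The paper proves Lemma \ref{Eulerlemma} ($v-e+f=2$ for an embedded planar graph with $N$ annular faces, obtained by inserting one auxiliary edge per annulus) and then asserts in two lines that the identification space $\M$ carries ``a graph with the same topology as $\Gamma\subset\C$,'' leaving implicit exactly the bookkeeping you make explicit: that the $c$ adjoined centers are extra $0$-cells of $\M$ not present as vertices of $\Gamma$, that punctured center zones and annular zones contribute $0$ to $\chi_c$ while elliptic and parallel zones contribute $1$, and that compactness, connectedness and orientability must be in hand before the classification theorem applies. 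Your use of additivity of the compactly supported Euler characteristic over a locally closed partition replaces the paper's edge-insertion trick, and running the parallel count in $\CC$ so that the $c$-terms cancel is a clean way to avoid ever evaluating the number $P$ of parallel zones. Two small remarks: the verification you flag as the ``main obstacle'' --- that the faces come in the types and quantities $c$, $N$, $\mathcal R$, and ``the rest'' --- is not something to prove but is exactly what conditions (f)--(i) of Definition \ref{def:admissible} postulate; and the connectedness of the face-adjacency graph of a planar embedding, which you assert, merits the one-line justification that the faces incident to any vertex or edge of $\Gamma$ are chained together by edge-adjacency. Neither point affects correctness.
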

We first need a lemma regarding the Euler characteristic.
\begin{lem}\label{Eulerlemma}
  An embedded planar graph with $N+1$ connected components separated by $N$ annular regions satisfies $v-e+f=2$, where $f$ is the number of simply connected faces (including the face containing $\infty$).
\end{lem}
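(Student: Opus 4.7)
The natural strategy is to invoke a generalized version of Euler's formula valid for disconnected embedded planar graphs and then correct for the annular faces. Specifically, for any graph embedded in $\CC$ with $c$ connected components, the relation
\[
v - e + f_{\mathrm{total}} = 1 + c
\]
holds, where $f_{\mathrm{total}}$ counts every connected component of the complement regardless of its topological type. Once this is established, the lemma is a single substitution away.

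To prove the generalized formula I would induct on the number of edges $e$. When $e = 0$ the graph is a set of $v$ isolated vertices, so $c = v$ and the complement is a sphere with $v$ points removed, giving $f_{\mathrm{total}} = 1$; the formula then reads $v + 1 = 1 + v$. For the inductive step, adding a single edge falls into one of two mutually exclusive cases: either it connects two previously distinct components, decreasing $c$ by one while leaving $f_{\mathrm{total}}$ unchanged; or it joins two vertices already lying in the same component, thereby creating a new cycle which splits an existing face into two and increases $f_{\mathrm{total}}$ by one while fixing $c$. In either scenario the quantity $v - e + f_{\mathrm{total}} - c$ is preserved, so the formula propagates.

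With this in hand the lemma follows immediately. Setting $c = N+1$ gives $v - e + f_{\mathrm{total}} = N + 2$. The hypothesis decomposes the complement $\CC \setminus \Gamma$ into $f$ simply connected faces together with exactly $N$ annular regions, so $f_{\mathrm{total}} = f + N$. Substituting and cancelling $N$ yields $v - e + f = 2$, as desired.

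The only subtle point is verifying, in the inductive step, that an edge joining two vertices of the same component genuinely splits a face into two; this is a standard planarity fact, since such an edge lies in the closure of exactly one face on each of its two sides. An alternative geometric route would be to add one arc across each annular region joining its two boundary components, thereby replacing the $N$ annuli by $N$ additional simply connected faces and producing a connected planar graph to which the classical $v - e + f = 2$ applies directly; this route works but requires checking that the arcs can be chosen to simultaneously merge all $N+1$ components, which ultimately encodes the same combinatorial bookkeeping as the induction above.
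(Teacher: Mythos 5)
Your proof is correct, but it takes a different route from the paper's. The paper goes geometric: it inserts one auxiliary arc across each of the $N$ annular regions, joining the two boundary components, which simultaneously connects the graph and converts each annulus into a simply connected face; the classical $v-e+f=2$ for connected planar graphs then applies to a graph with $v$ vertices, $e+N$ edges and $f+N$ faces, and the $N$'s cancel. You instead prove the generalized Euler relation $v-e+f_{\mathrm{total}}=1+c$ for arbitrary (possibly disconnected) embedded planar graphs by induction on edges, and then substitute $c=N+1$ and $f_{\mathrm{total}}=f+N$. Your approach is more self-contained and more general --- it does not need the hypothesis that the $N$ annuli actually link up all $N+1$ components into a single connected graph after the arcs are added, a point the paper asserts without comment (it does hold here because the components and annuli form a tree, which is implicit in ``$N+1$ components separated by $N$ annular regions''). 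The paper's approach is shorter and stays entirely within the classical statement, at the cost of that unverified connectivity claim; you correctly flag this as the hidden bookkeeping in the geometric route. The only loose phrase in your argument is the justification that an edge with both endpoints in one component splits a face --- the clean statement is that the new edge together with a path in that component forms a Jordan curve separating the two sides of the edge --- but this is indeed the standard planarity fact you cite, so there is no gap.
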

\begin{proof}
It is well known that $v-e+f=2$ holds for connected graphs embedded in the plane with $v$ vertices, $e$ edges, and $f$ faces which are simply connected. If there are additionally $N$ annular regions, one may introduce one edge per annulus that connects a vertex on one boundary component of the annulus to the other boundary component.  This yields a connected embedded planar graph which has $v$ vertices, $e+N$ edges, and $f+N$ faces. Substituting in the equation gives the same result.
\end{proof}
\begin{proof}[Proof of Proposition \ref{homeoprop}]
We utilize the Euler characteristic. Note that
$\M$ has a topology induced by the topology of $\C$. The identifications in constructing $\M$ give a graph on $\M$ with the same topology as $\Gamma \subset \mathbb{C}$. By Lemma \ref{Eulerlemma}, $\Gamma$ satisfies $v-e+f=2$, where $f$ counts the exterior face but not the annular regions.  Hence, $\M$ is homeomorphic to $\CC$.
\end{proof}
\begin{cor}
$\M$ is isomorphic to $\CC$.
\end{cor}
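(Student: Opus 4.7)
The plan is a direct appeal to the Uniformization Theorem, using the work already done. By the atlas construction in Section \ref{subsubsec:atlas}, $\M$ carries the structure of a Riemann surface whose transition maps are translations (or translations composed with the local root charts at white vertices and the exponential/root charts at punctures), hence are holomorphic. By Proposition \ref{homeoprop}, $\M$ is homeomorphic to $\CC$, so in particular it is compact, connected, and simply connected.

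The next step is to recall that the Uniformization Theorem states that every simply connected Riemann surface is biholomorphic to exactly one of $\CC$, $\C$, or $\D$. Among these three models only $\CC$ is compact. Since $\M$ is compact (being homeomorphic to the sphere), the only possibility is that $\M$ is biholomorphic to $\CC$.

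The argument contains no real obstacles: the work has been front-loaded into constructing the atlas (Section \ref{subsec:M}) and verifying the topological type (Proposition \ref{homeoprop}). The only thing to be careful about is that the charts at the white vertices (defined via an $(m+1)$-st root) and at the punctures (defined via exponentials or the conformal isomorphisms to $\D \setminus \{0\}$ extended homeomorphically to $0$) genuinely give a \emph{holomorphic} atlas on the closure $\M$, not merely a continuous one, so that the Uniformization Theorem can be invoked. This was already verified in Section \ref{subsubsec:atlas}, so the corollary follows immediately.
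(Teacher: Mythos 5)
Your proposal is correct and follows essentially the same route as the paper: both invoke the Uniformization Theorem together with the fact that $\M$ is a compact, simply connected Riemann surface (from the atlas construction and Proposition \ref{homeoprop}), so that $\CC$ is the only possible model. No further comment is needed.
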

\begin{proof}
Since $\M$ is a simply connected Riemann surface that is
compact, it must be conformally equivalent to the Riemann sphere $\CC$ by The
Uniformization Theorem.
\end{proof}
\subsection{The Vector Field $\xi_{\M}$}
\label{assvfs}
We will show in this section that the singularities of $\xi_{\M}$ are equilibrium points and poles. The vector field $\xi_{\M}$ is defined on the half-infinite cylinders $\mathcal{C}$, finite cylinders $\mathcal{A}$,  half-planes $\pm \mathbb{H}$, strips $\mathcal{S}$, and neighborhoods of separatrix points as $\left(\eta\right)_{\ast}\left(\xi_{\M}\right)=\vf$. It remains to show what the extension of the vector field is at the punctures and at the white vertices.\par
Recall from \S \ref{subsubsec:atlas} that a neighborhood $U_{[b]}$ of a puncture $\infty_{[b]}$ of $\xi_{\M}^{\ast}$ that corresponds to a black vertex $b$ of $\Gamma$ with incoming only or outgoing only edges corresponds to a truncated union of strips identified on the boundary to form a cylinder (recall also Figure \ref{fig:atlasstrips}).
 There is a chart $\eta: U_{[b]} \rightarrow V$, where $V$ is a neighborhood of $0$, such that
\begin{equation}
\left(\eta\right)_{\ast}\left(\xi_{\M}\right)=\left(\varphi\right)_{\ast}\left(\vf\right)=\frac{\mp 2 \pi i}{\rho}z \cdot \vf,
\end{equation}
where $\varphi(z)=\exp\left( \frac{\mp 2 \pi i}{\rho} z\right)$.  The extension of the chart at the puncture $\infty_{[b]}$ leads to a holomorphic extension of $\xi_{\M}$ at the puncture  by $\xi_{\M}=0$.\par
A neighborhood $U_{[\mathcal{C}]}$ of a puncture $\infty_{[\mathcal{C}]}$ stemming from a cylinder zone is nearly identical to the above. The total length $L$ of homoclinics on the boundary gives the map $\varphi(z)=\exp\left( \frac{\mp 2 \pi i}{L} z\right)$ giving a vector field which extends to $\xi_{\M}=0$ at the puncture. \par
 A neighborhood $U_{[w]}\subset \xi_{\M}^{\ast}$ of a white vertex $w:=w_i$ with valence $v:=v_i$ corresponds to a $v/2$ cover of a sufficiently small neighborhood of $0$.
The induced vector field can be calculated:
\begin{equation}
\left(\eta\right)_{\ast}\left(\xi_{\M}\right)=\left(z^{2/v}\right)_{\ast}\left( \vf \right)=\frac{2}{v}z^{-\left(v/2-1\right)}\vf
\end{equation}
in another sufficiently small neighborhood of $0$ since the $v/2$ covering
 has constant vector field $\vf$.  Therefore, the vector field  $\xi_{\M}$ has a pole of order $v/2-1$ at $w$.\par
A neighborhood  $U_{[b]}\subset \xi_{\M}^{\ast}$ of a puncture $\infty_{[b]}$ of $\xi_{\M}^{\ast}$ that corresponds to a black vertex $b$ of $\Gamma$ with both incoming and outgoing edges
corresponds to a restricted union of half-planes and strips.
We wish to show that the holomorphic extension of $\eta$ for the  chart $\eta_{[b]}:U_{[b]}\rightarrow V^{\ast}$ to $0 \in V$ gives that the extended vector field to the puncture $\infty_{[b]}$ is a zero of order $m=r/2+1$, where $r:=r(b)$ is the number of cyclic reversals (see \S \ref{sec:admissible_graph}) of the edge orientation when traversing around $b$. We do this by an index argument. More specifically we will show that  the index of a sufficiently small curve around 0 is $m$. Consider the (piecewise smooth) Jordan curve about $\infty_{[b]}$ in $U_{[b]}$ that corresponds to the curve $\gamma_{[b]}$ in rectifying coordinates with half-circles in each upper and lower half-plane and appropriate line segments in the half-strips  with clockwise orientation so that $\infty_{[b]}$ is to the left of the curve (see Figure \ref{fig:indexfig}).
\begin{figure}[htbp]%
\large
        \includegraphics[width=.45\textwidth]{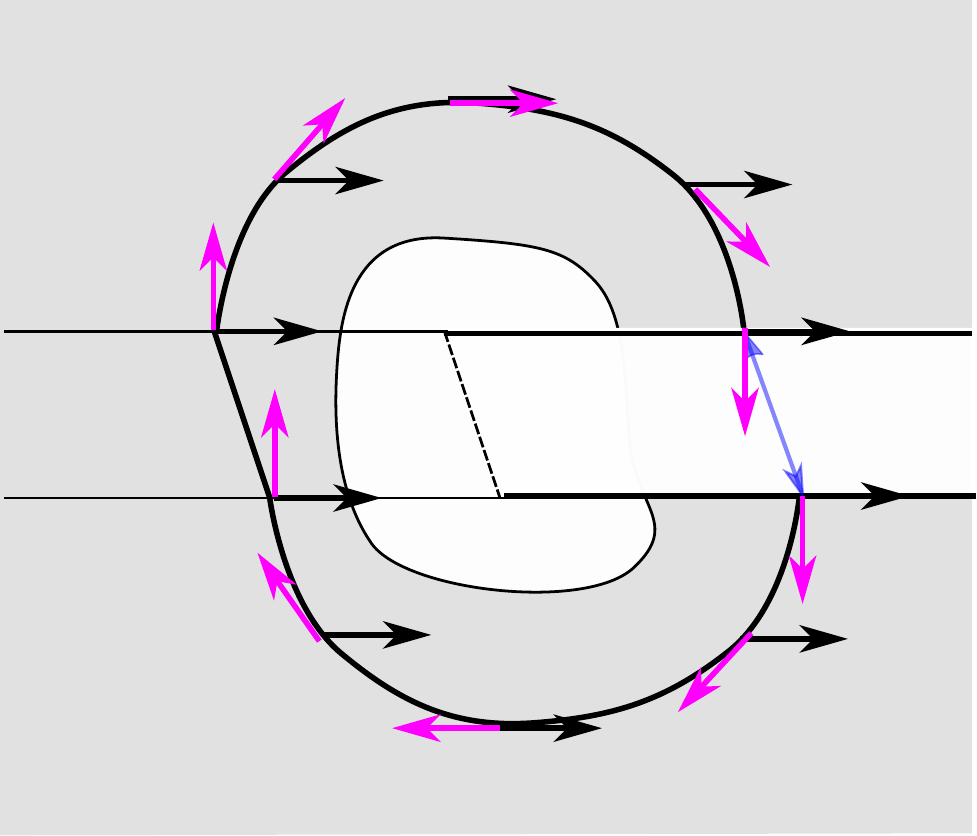}%
        \hspace{.6in}
        \includegraphics[width=.45\textwidth]{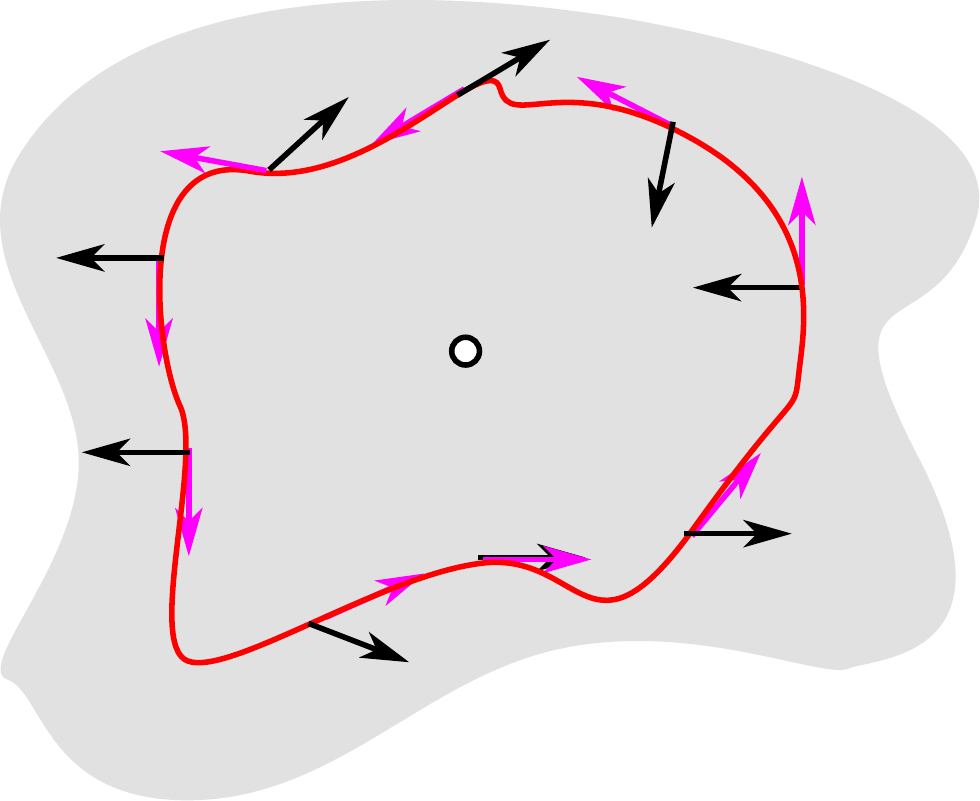}%
        \put(-470,150) {$U_{[b]}$ }
        \put(-410,28) {$\gamma_{[b]}$ }
        \put(-170,20) {$\gamma$ }
        \put(-237,88) {$\longrightarrow$ }
        \put(-230,98) {$\eta$ }
\caption{A neighborhood  $U_{[b]}\subset \xi_{\M}^{\ast}$ of a puncture $\infty_{[b]}$ of $\xi_{\M}^{\ast}$ for a black vertex $b$ of $\Gamma$ with both incoming and outgoing edges
corresponds to a restricted union of half-planes and strips (left). The vector field in these regions is $\vf$ (black arrows). The neighborhood of zero $V^{\ast}=\eta (U_{[b]})$ containing the simple closed curve $\gamma = \eta (\gamma_{[b]})$ (right). The pink arrows are the tangent vectors to $\gamma$ with a counterclockwise orientation, and the black arrows are the vectors of $\left( \eta \right)_{\ast}\left( \xi_{\M} \right)$ along $\gamma$.
}
\label{fig:indexfig}
\end{figure}
The curve $\gamma_{[b]}$ maps to another Jordan curve $\gamma$ in $V^{\ast}$ under $\eta$ since $\eta$ is univalent.  Moreover,  $\left(\eta\right)_{\ast}\left(\xi_{\M}\right)$ is never 0 along $\gamma$.  The angles between $\left(\eta\right)_{\ast}\left(\xi_{\M}\right)$ on $\gamma$ and the tangent vectors of $\gamma$ are the same as the angles between the vector field $\vf$ and the tangent vectors of $\gamma$ since $\eta$ is conformal (see Figure \ref{fig:indexfig}).

This implies that on $\gamma$, $\left(\eta\right)_{\ast}\left(\xi_{\M}\right)$ will be tangent to $\gamma$ exactly $r$ times, and alternating with pointing along the orientation of $\gamma$ and against the orientation of $\gamma$ when we travel along $\gamma$.  Along the orientation of $\gamma$, on the arcs between the tangents going along to the tangents going against, the vector field must be pointing inward, since this corresponds to what happens in rectifying coordinates.  This implies that the vector field must be rotating in the same orientation as $\gamma$, otherwise there would be a place in this arc where there is another tangent.  So with respect to the tangents of $\gamma$, $\left(\eta\right)_{\ast}\left(\xi_{\M}\right)$ has rotated $r/2$ times.  We must add one more time around, accounting for the index of $\gamma$.  This gives that $\left(\eta\right)_{\ast}\left(\xi_{\M}\right)$ has index $r/2+1$ at 0.\par
\subsection{Proof of Theorem A}
\label{finalpf}
We now show that there is a  conformal isomorphism $\Phi:\M \rightarrow \CC$ that
induces a
vector field $\xi_R= R(z) \vf= \frac{P(z)}{Q(z)}\vf$ with $\deg(P)=\deg(Q)+2$  having separatrix graph  homeomorphic to $\Gamma$.

\begin{proof}
 By the Uniformization Theorem, there exists an isomorphism $\Psi:\M \rightarrow \CC$, which is unique up to post composition by a M\"{o}bius transformation and induces a vector field  $\Psi_{\ast}\left(\xi_{\M}\right)$ defined on $\CC$.  Choose $\Psi$ such that $\Psi\left(a\right)=\infty$, where $a$ is a regular point that is not on a separatrix. Then  $\Psi_{\ast}\left(\xi_{\M}\right)$ is a meromorphic vector field on $\CC$, expressed in canonical coordinates as $g \frac{d}{dz}$. Since $g$ is meromorphic on $\CC$, it is rational; and we will show that it is of the form $\frac{P(z)}{Q(z)}\vf$ with $\deg(P)=\deg(Q)+2$.
  We know by \S \ref{assvfs} that $g$ has $q$ poles of order $m_i$ at $\Psi(w_i)$, so the degree  of $Q$ is $\sum_{i=1}^{s}2(m_i+1)=:n$. We will show that the degree of the numerator $P$ is $n+2$.\par
   Recall $r(b_i)$ is the number of edge orientation cyclic reversals at $b_i$, and let $d(b_i):=v(b_i)-r(b_i)$ be the number of non-reversals in edge orientation at $b_i$. The degree of the numerator of $R$ is the sum of equilibrium points, counting multiplicity
  \begin{equation}\label{degnum}
    \deg(P)=c+\sum_{i=1}^{p}(r(b_i)/2+1)=c+p+\frac{1}{2}\sum_{i=1}^{p} r(b_i).
  \end{equation}
 From Lemma \ref{Eulerlemma}, the embedded $\Gamma$ satisfies $v-e+f=2$.  The number of black and white vertices gives $v=q+p$. The number of edges is $e=\sum_{i=1}^{q}2(m_i+1)-h$, the number of separatrix directions, minus the number of homoclinic or heteroclinic. This further simplifies to $e=2n+2q-h$ by using $n:=\sum_{i=1}^{s}2(m_i+1)$. The number of faces is $f=c+\sharp \text{half-planes} + \sharp \text{strips}$. The number of half-planes is $\sum_{i=1}^{p}r(b_i)$, and the number of strips is $\frac{1}{2}\sum_{i=1}^{p}d(b_i)$. This gives
  \begin{align}
    v-e+f & =2 \nonumber \\
    q+p-(2n+2q-h)+ c+ \sum_{i=1}^{p}r(b_i) + \frac{1}{2}\sum_{i=1}^{r}d(b_i) & =2.
  \end{align}
  Combining this with Equation \eqref{degnum} gives
  \begin{align}\label{degnum2}
    \deg(P)=&2-q+2n+2q-h-\frac{1}{2}\sum_{i=1}^{p}r(b_i)-\frac{1}{2}\sum_{i=1}^{p}d(b_i) \nonumber \\
            =&2+2n+q-h-\frac{1}{2}\sum_{i=1}^{p}(r(b_i)+d(b_i)).
  \end{align}
  Now $\sum_{i=1}^{p}(r(b_i)+d(b_i))$ is the number of landing separatrices, which equals $\sum_{i=1}^{q}2(m_i+1)-2h$. Combining with Equation \eqref{degnum2} gives
  \begin{align}\label{degnum3}
    \deg(P)=&2+2n+q-h-(\sum_{i=1}^{q}(m_i+1)-h) \nonumber \\
            =&2+2n+q-h-(n+q-h) \nonumber \\
            =&n+2.
  \end{align}
\end{proof}
\section*{Appendix}
Here we prove Proposition \ref{lengthprop}.
\begin{proof}[Proof of Proposition \ref{lengthprop}]
  Without loss of generality, we may assume that $x_1,\dots,x_n$ are the lengths of the edges that are on the boundary of at least one of the $N$ annular regions, since all other edges have lengths which may be chosen freely. 
  Each annular region has two boundary components: one on the left of the annulus, respecting graph orientation, and one on the right. The $N$ annular regions give $N$ homogeneous linear equations in $n$ variables. Each equation in the system can be written as
  \begin{equation}\label{eq:lengthsys}
    \sum_{i \in I_{\ell}} x_i = \sum_{i \in I_{r}} x_i,
  \end{equation}
  where $I_{\ell}$ is the subset of indices corresponding to the left-hand boundary component of that annulus, and $I_{r}$ is the subset of indices corresponding to the right-hand boundary component of the same annulus. This system of equations written in the form \eqref{eq:lengthsys} has the following properties:
  \begin{enumerate}[label=(\alph*)]
    \item \label{itm:first} There is at least one $x_i$ on each side of each equation.
    \item \label{itm:second} Each $x_i$ can appear at most twice in the system.  If it does appear twice, it appears once on the left and once on the right of two different equations.
    \item \label{itm:third} If two equations have the same $x_i$ on one side, then their other sides must not have any $x_j$ in common.
  \end{enumerate}
  We explain the above properties. Property \ref{itm:first} is necessary since each annulus has at least one edge on each boundary component. Property \ref{itm:second} is due to the fact that each edge can be on the boundary of at most two annuli; and if it is on the boundary of two annuli, it is on the left-hand boundary of one annulus and the right-hand boundary of the other. Property \ref{itm:third} reflects that if two annuli share an edge on one boundary component, then their other boundary components share  no edges. Indeed, the core curves in the annuli separate these second boundary components.

  In Figure \ref{fig:sharing_edges} we show the two different configurations of two annuli sharing some edges, where we can see the above three properties into a concrete example. On the left hand side of this figure  we show the not nested case where  both annuli share the edge $x_6$. Using the notation introduced before, we have the following two equations
  \[
  \left \{
 \begin{array}{lcr}
  x_1+x_2   &= &  x_3+x_4+x_5+{\b x_6}\\
  {\b x_6} + x_7 + x_8 &= &  x_9+x_{10}
 \end{array}
 \right.
  \]

     \begin{figure}[ht]
    \centering
     \includegraphics[width=0.85\textwidth]{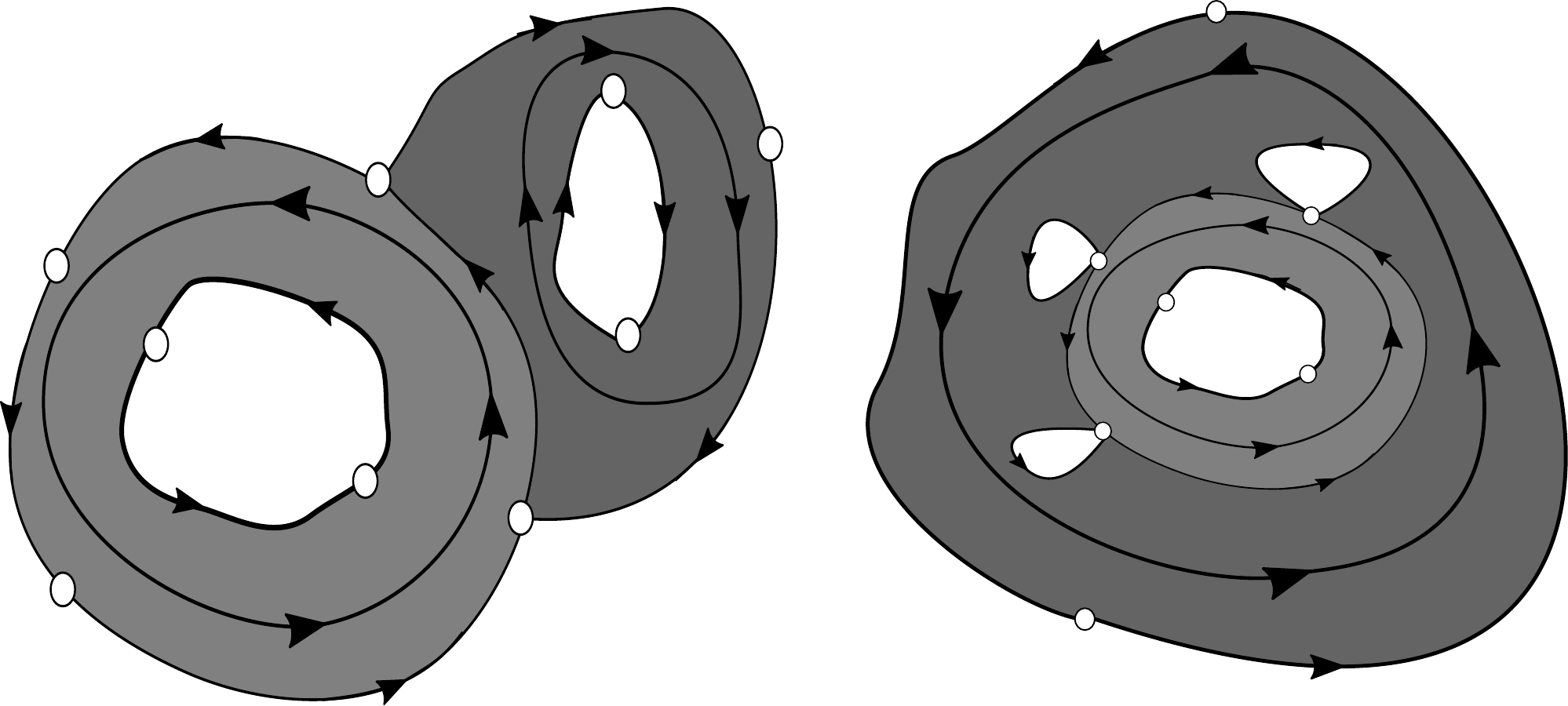}
    \put(-390,133) {$x_3$ }
     \put(-415,53) {$x_4$ }
    \put(-360,63) {$x_1$ }
        \put(-360,-1) {$x_5$ }
    \put(-300,165) {$x_7$ }
    \put(-225,50) {$x_8$ }
    \put(-245,135) {$x_9$ }
    \put(-258,110) {$x_{10}$ }
    \put(-290,120) {{\b $x_6$} }
    \put(-330,90) {$x_2$ }
    \put(-100,85) {$x_1$ }
        \put(-140,85) {{\b $x_5$ }}
                \put(-140,113) { $x_4$ }
                                \put(-140,173) { $x_{10}$ }
      \put(-140,64) { $x_6$ }
    \put(-95,105) {$x_2$ }
        \put(-95,135) {{\b $x_3$} }
                \put(-70,135) { $x_8$ }
                \put(-90,50)  {{\b $x_7$ }}
    \put(-115,10) {$x_9$ }
           \caption{Examples of two annuli sharing some edges. On the left hand side the two annuli are not nested and they share the edge $x_6$, on the right hand side  the two annuli are nested and they share the edges $x_3, x_5$ and $x_7$.}
    \label{fig:sharing_edges}
    \end{figure}

  On the right hand side of the figure we show the nested configuration where both annuli share the edges $x_3,x_5$ and $x_7$. In this concrete example the two linear equations writes as
  \[
  \left \{
 \begin{array}{lcr}
x_1+x_2 &= &   {\b x_3+x_5+ x_7}  \\
  {\b x_3}+x_4+{\b x_5}+x_6+ {\b x_7}+x_8&= & x_9 + x_{10}.
 \end{array}
 \right.
  \]

   Applying a result by Dines \cite{PositiveSolutionsLinear}
   will show that there exists a solution to this system having all positive components, henceforth called \emph{positive solutions}.

The method of Dines is an inductive method to reduce the system of equations to a system having one equation less, where the initial system admits a positive solution if and only if the new system does. A requirement to use this result is that in no step of the induction do any of the equations have all positive or all negative coefficients when written in standard form. Clearly, such an equation cannot have a positive solution. We will show that at each inductive step in our systems \eqref{eq:lengthsys} that the equations will never have all positive or all negative coefficients; equivalently, at least one term on the left and on the right have positive coefficients. We will first review the procedure of Dines and then apply it to our case. Consider a system of $N$ homogeneous linear equations of the form
\begin{equation}
  \sum_{i=1}^{n} a_{ri}x_i=0, \quad r=1,\dots,N.
\end{equation}
The procedure of Dines  first partitions the indices of the first equation according to the signs of the coefficients: let $I$ be the set of  $i$ such that $a_{1i}\geq 0$ and let $J$ be the set of $j$ such that $a_{1j}<0$. Then the first equation is rewritten as
\begin{equation}\label{eq:1steqn}
    \sum_{i\in I} a_{1i} x_i = -\sum_{j \in J}a_{1j} x_j,
  \end{equation}
  and the remaining $N-1$ equations are each written in the form
  \begin{equation}\label{eq:rtheqn}
    \sum_{j\in J} a_{rj} x_j = -\sum_{i\in I}a_{ri} x_i, \quad r=2,\dots,N,
  \end{equation}
  where it is understood that the $i$ and $j$ are from the same partition of indices as determined by the first equation. Next, the first equation is multiplied by each of the other $N-1$ equations to create a new system of $N-1$ linear equations in the new variables $x_{ij}:=x_ix_j$:
   \begin{equation}\label{eq:product}
     \sum_{i}a_{1i}x_i\cdot \sum_{j}a_{rj}x_j = \sum_{j}a_{1j}x_j\cdot \sum_{i}a_{ri}x_i.
   \end{equation}
   The procedure is repeated until there is only one remaining equation in $y_i:=x_{i_1}x_{i_2}\cdots x_{i_N}$, $i_j = 1,\dots, n$ for all $j$
  \begin{equation}
  \sum_{i=1}^{n\cdot N} b_{i}y_i=0,
\end{equation}
which has a positive solution
\begin{align}
  y_i= & -\sum_{j}b_j \\
  y_j= & \sum_{i} b_i,
\end{align}
   where again the indices $i$ and $j$ are partitioned such that $b_i \geq 0$ and $b_j<0$. Dines proves that the existence of this positive solution guarantees the existence of a positive solution for the original system, as long as at each step of the iteration, no equation has only positive or only negative coefficients (in standard form).
  \par
  Let us apply this procedure to our case. We will need to show that at each step of the procedure, all equations in the system have both positive and negative coefficients (equivalently, at least one term on each side with positive coefficient). It will be enough to show that each intermediate step satisfies the three properties above.
  Applying to the annulus problem, the equations \eqref{eq:1steqn} and \eqref{eq:rtheqn} take the form
  \begin{equation}\label{eq:A1steqn}
    \sum_{\{i\}\subseteq I}  x_i = \sum_{j\in J} x_j, \quad r=1
  \end{equation}
  and
  \begin{equation}\label{eq:Artheqn}
    \sum_{\{j\}\subseteq J}  x_j = \sum_{\{i\}\subseteq I}\pm x_i, \quad r=2,\dots,N,
  \end{equation}
  since all coefficients are $0$, $1$ or $-1$.  Note that both sides of the first equation have only positive coefficients as written.  The $r_{th}$ equation as presented has left hand side which  has only non-negative coefficient terms because of property \ref{itm:second}.  If the left is non-zero, the right hand side has at least one positive coefficient term by property \ref{itm:first}; if the left equals zero, the right hand side has at least one positive and one negative term also by property \ref{itm:first}. Since both sides of \eqref{eq:A1steqn} have  positive coefficients, the signs of the product coefficients of \eqref{eq:A1steqn} and \eqref{eq:Artheqn} is determined by the signs of the  coefficients of \eqref{eq:Artheqn}.  Hence, multiplying the left sides of \eqref{eq:A1steqn} and \eqref{eq:Artheqn} gives only positive coefficient terms or zero on the left, and as before, at least one positive coefficient term on right in the first case and at least one positive and one negative in the second case.
   \begin{align}
     \text{first equation} \quad (+)&=(+)   \\
     \text{rth equation} \quad (+)&=(+)+(+ \ or \ -) \quad \text{or}\quad 0=(+)+(-)+(+ \ or \ -)    
   \end{align}
   Therefore the equations in the resulting system have not all positive or all negative coefficients in standard form (Property \ref{itm:first}). There is, however, a detail that needs to be considered: it is \emph{a priori} possible that some of the terms on the left may be equal to some on the right in the product, and simplifying could hypothetically cause the equation to only have positive or negative coefficient terms in standard form. Let us see that this cannot happen with our system. You will get cancelling terms if the product of \eqref{eq:A1steqn} and \eqref{eq:Artheqn} has some $x_{i_0j_0}$ both on the right and on the left.  The $i_0$ will come from the left side of \eqref{eq:A1steqn} and the $j_0$ would have to be from the left side of \eqref{eq:Artheqn}. Now the indices $j$ on the left of \eqref{eq:Artheqn} are some subset of the $j$ on right of \eqref{eq:A1steqn}. The right of \eqref{eq:Artheqn} gets multiplied by all of these $j$, so you would only get the same pair $i_0j_0$ if the right side of equation \eqref{eq:Artheqn} contains some $i$ that is on left of \eqref{eq:A1steqn}. This cannot happen by property \ref{itm:third} on \eqref{eq:A1steqn} and \eqref{eq:Artheqn}. \par
  Now we need to show that the new system of equations in $x_{ij}$ satisfies the same three properties so that we can iterate this argument. First note that the coefficients of the product system are $0$, $1$, and $-1$. The new system inherits property \ref{itm:first} by the paragraph above.  We now show that it also inherits property \ref{itm:second} That is, every $x_{ij}$ can appear at most twice in the $N-1$ equations, and if it appears twice they are in different equations with opposite sign in standard form (or each positive on opposite sides). By the argument in the preceding paragraph, no $x_{i_0j_0}$ can be both on the left and right of the same equation. The variable $x_{i_0j_0}$ can not appear anywhere else on the left in the product system \eqref{eq:product}, since the $j_0$ comes from the left of \eqref{eq:Artheqn}, and this cannot appear anywhere else on the left by property \ref{itm:second} since it is  on the right of \eqref{eq:A1steqn}. The same $x_{i_0j_0}$ can appear at most once on the right since $i_0$ already appears on left in \eqref{eq:A1steqn} so can appear at most once on right of the system (and both have positive coefficients.) \par
  It now remains to show that if two equations in the product have an $x_{i_0j_0}$ once on each a respective side, that the opposite sides of the same equations share no other $x_{i_1j_1}$.  If the product gives a common $x_{i_0j_0}$, it must stem from three equations
  \begin{align}
    (r_1) && \ \dots + x_{i_0}+\dots  = & \dots + x_{j_0}+\dots \\
    (r_{m_1}) && \ (\ast) \ = &  \dots + x_{i_0}+\dots \\
    (r_{m_2}) && \ \dots + x_{j_0}+\dots = & \ (\ast \ast).
  \end{align}
  Note that both sides of $(r_1)$ are disjoint,  $(\ast)$ is disjoint from the right of $(r_1)$: $\dots + x_{j_0}+\dots$, and $(\ast \ast)$ is disjoint from the left of $(r_1)$: $\dots + x_{i_0}+\dots$. The product of $(r_1)$ with the other two gives
  \begin{align}
    \dots + x_{i_0j_0}+\dots & = (\dots + x_{j_0}+\dots)(\ast \ast)  \\
    (\ast)(\dots + x_{i_0}+\dots) & = \dots + x_{i_0j_0}+\dots,
  \end{align}
  and $(\dots + x_{j_0}+\dots)(\ast \ast)$ and $(\ast)(\dots + x_{i_0}+\dots)$ are disjoint since, even if $(\ast \ast)$ and $(\ast)$ are not disjoint (have some same index), the other index in the product will be different.
\end{proof}
\bibliographystyle{alpha}
\bibliography{Biblio_Dias_edit}

\begin{thebibliography}{APMnRSCYR20}

\bibitem[APMnR17]{EssentialSingularity}
Alvaro Alvarez-Parrilla and Jes\'{u}s Muci\~{n}o Raymundo.
\newblock Dynamics of singular complex analytic vector fields with essential
  singularities {I}.
\newblock {\em Conform. Geom. Dyn.}, 21:126--224, 2017.

\bibitem[APMnRSCYR20]{FlowsRiemannSurfaces}
Alvaro Alvarez-Parrilla, Jes\'{u}s Muci\~{n}o Raymundo, Selene
  Solorza-Calder\'{o}n, and Carlos Yee-Romero.
\newblock On the geometry, flows and visualization of singular complex analytic
  vector fields on riemann surfaces.
\newblock preprint, 2020.

\bibitem[BD10]{ClassificationComplexPolynomial}
Bodil Branner and Kealey Dias.
\newblock Classification of complex polynomial vector fields in one complex
  variable.
\newblock {\em J. Difference Equ. Appl.}, 16(5-6):463--517, 2010.

\bibitem[Ben91]{PlaneAutonomous}
Harold~E. Benzinger.
\newblock Plane autonomous systems with rational vector fields.
\newblock {\em Trans. Amer. Math. Soc.}, 326(2):465--483, 1991.

\bibitem[BT77]{ConformalEquivalence}
Louis Brickman and E.~S. Thomas.
\newblock Conformal equivalence of analytic flows.
\newblock {\em J. Differential Equations}, 25(3):310--324, 1977.

\bibitem[DES13]{VecteursPolynomiaux}
Adrien Douady, Francisco Estrada, and Pierrette Sentenac.
\newblock Champs de vecteurs polynomiaux sur $\mathbb c$.
\newblock 2013.

\bibitem[Dia13]{EnumeratingCombinatorialClasses}
Kealey Dias.
\newblock Enumerating combinatorial classes of the complex polynomial vector
  fields in {$\Bbb C$}.
\newblock {\em Ergodic Theory Dynam. Systems}, 33(2):416--440, 2013.

\bibitem[Din27]{PositiveSolutionsLinear}
Lloyd~L. Dines.
\newblock On positive solutions of a system of linear equations.
\newblock {\em Ann. of Math. (2)}, 28(1-4):386--392, 1926/27.

\bibitem[GGJ07]{LocalGlobalPhasePortrait}
Antonio Garijo, Armengol Gasull, and Xavier Jarque.
\newblock Local and global phase portrait of equation {$\dot z=f(z)$}.
\newblock {\em Discrete Contin. Dyn. Syst.}, 17(2):309--329, 2007.

\bibitem[H{\'a}j66a]{NotesMeromorphicPartI}
Otomar H{\'a}jek.
\newblock Notes on meromorphic dynamical systems. {I}.
\newblock {\em Czechoslovak Math. J.}, 16 (91):14--27, 1966.

\bibitem[H{\'a}j66b]{NotesMeromorphicPartII}
Otomar H{\'a}jek.
\newblock Notes on meromorphic dynamical systems. {II}.
\newblock {\em Czechoslovak MAth. J.}, 16 (91):28--35, 1966.

\bibitem[MnR02]{ComplexStructures}
Jes\'{u}s Muci\~{n}o Raymundo.
\newblock Complex structures adapted to smooth vector fields.
\newblock {\em Math. Ann.}, 322(2):229--265, 2002.

\bibitem[NK94]{MeromorphicComplexDifferentialEquations}
D.~J. Needham and A.~C. King.
\newblock On meromorphic complex differential equations.
\newblock {\em Dynam. Stability Systems}, 9(2):99--122, 1994.

\bibitem[Per01]{DifferentialEquationsDynamicalSystems}
Lawrence Perko.
\newblock {\em Differential equations and dynamical systems}, volume~7 of {\em
  Texts in Applied Mathematics}.
\newblock Springer-Verlag, New York, third edition, 2001.

\bibitem[Sma85]{EfficiencyAlgorithms}
Steve Smale.
\newblock On the efficiency of algorithms of analysis.
\newblock {\em Bull. Amer. Math. Soc. (N.S.)}, 13(2):87--121, 1985.

\bibitem[STW88]{NewtonianGraphComplexPolynomial}
Michael Shub, David Tischler, and Robert~F. Williams.
\newblock The {N}ewtonian graph of a complex polynomial.
\newblock {\em SIAM J. Math. Anal.}, 19(1):246--256, 1988.

\bibitem[Sve79]{VectorFieldsComplexFunctions}
Ronald Sverdlove.
\newblock Vector fields defined by complex functions.
\newblock {\em J. Differential Equations}, 34(3):427--439, 1979.

\end{thebibliography}
\addcontentsline{toc}{section}{References}
\end{document}